\documentclass[reqno]{amsart}
\usepackage{amsmath,amssymb,amsthm,mathtools}
\usepackage[a4paper,left=2.6cm,right=2.6cm,top=2cm,bottom=2cm]{geometry}
\usepackage{float}
\usepackage[table]{xcolor}
\usepackage[section]{placeins}
\usepackage{listings}
\lstdefinestyle{sagestyle}{
    basicstyle=\ttfamily\footnotesize,
    breakatwhitespace=false,
    breaklines=true,
    captionpos=b,
    keepspaces=true,
    numbers=left,
    numberstyle=\tiny,
    numbersep=5pt,
    showspaces=false,
    showstringspaces=false,
    showtabs=false,
    tabsize=4
}
\usepackage{hyperref}
\hypersetup{
    colorlinks=true,
    linkcolor=blue,
    citecolor=blue,
    urlcolor=blue
}

\theoremstyle{plain}
\newtheorem{theorem}{Theorem}[section]
\newtheorem{lemma}[theorem]{Lemma}
\newtheorem{proposition}[theorem]{Proposition}
\newtheorem{corollary}[theorem]{Corollary}
\theoremstyle{definition}
\newtheorem{definition}[theorem]{Definition}
\newtheorem{notation}[theorem]{Notation}
\newtheorem{example}[theorem]{Example}
\theoremstyle{remark}
\newtheorem{remark}[theorem]{Remark}

\theoremstyle{plain}
\newtheorem{thmx}{Theorem}

\newcommand{\C}{\mathbb{C}}
\newcommand{\Z}{\mathbb{Z}}
\DeclareMathOperator{\sgn}{sgn}

\newcommand{\vc}[2]{\begin{bmatrix}#1\\#2\end{bmatrix}}
\newcommand{\vcc}[3]{\begin{bmatrix}#1\\#2\\#3\end{bmatrix}}

\title{Stability of plethysm coefficients and modified polynomial induction}
\author{Soumyadip Sarkar}
\address{$^{1}$The Institute of Mathematical Sciences, CIT Campus, Taramani, Chennai 600113, India}
\address{$^{2}$Homi Bhabha National Institute, Training School Complex, Anushakti Nagar, Mumbai 400094, India}
\email[Soumyadip Sarkar]{soumyadips@imsc.res.in}
\subjclass[2020]{05E05, 05E10, 20C30}
\keywords{Symmetric functions, plethysm, vector-partition functions, Foulkes coefficient, skewing operators}

\begin{document}
\begin{abstract}
Let $h_n$ denote the complete homogeneous symmetric function of degree $n$,
$s_\nu$ the Schur function indexed by a partition $\nu$, and
$\langle\cdot,\cdot\rangle$ the Hall inner product. The
plethysm coefficient $\langle h_n[h_m], s_\nu\rangle$ is the multiplicity of
the Weyl module $W_\nu(\mathbb{C}^N)$ in $\mathrm{Sym}^n(\mathrm{Sym}^m(\mathbb{C}^N))$ whenever
$\nu$ has at most $N$ parts. For a partition $\lambda$, let
$\lambda[mn]=(mn-|\lambda|,\lambda_1,\lambda_2,\dots)$. Bowman and Paget
proved that $\langle h_n[h_m], s_{\lambda[mn]}\rangle$ is independent of $m$
and $n$ once $m,n\ge|\lambda|$.

We prove that this coefficient is already constant for $m\ge\lambda_1$ and
$n\ge\lfloor(|\lambda|+\lambda_1)/2\rfloor$, and that its stable value equals
$\langle s_\lambda, H[H-1-h_1]\rangle$, where $H=\sum_{i\ge0}h_i$; the proof
uses the skewing operators adjoint to multiplication by complete homogeneous symmetric functions. We give a short
new proof of the Bowman--Paget theorem based on stability of vector-partition
functions, which count the ways of writing a vector of non-negative integers as
an unordered sum of non-zero such vectors (its parts). Using a Vandermonde-type
determinantal identity, we deduce that the stable value is a signed sum of
numbers of vector partitions all of whose parts have entries summing to at
least~$2$. The same method gives a short proof of Brion's
theorem that $\langle h_n[h_{m+d}], s_{\lambda+(nd)}\rangle$ is eventually
constant in $d$, where $\lambda+(nd)$ is obtained from $\lambda$ by adding
$nd$ to its first part. We show that the stable value equals the multiplicity
of $W_\lambda(\mathbb{C}^k)$ in an explicit representation of $GL_k(\mathbb{C})$, where
$k=\lfloor|\lambda|/2\rfloor$, independent of $m$ and $n$: the space of
homogeneous polynomial functions of degree $|\lambda|$ on $k\times k$ matrices
that are invariant under permuting rows and have no monomial of degree $1$ in
the entries of any single row, with $GL_k(\mathbb{C})$ acting by right translation.
This module is a modification of the polynomial induction of Narayanan, Paul,
Prasad and Srivastava, which constructs polynomial representations of general
linear groups from representations of symmetric groups.
Finally, we prove an explicit positive formula for the stable value, as the
number of lattice points of a polytope, when $\lambda_1\le 4$.

\end{abstract}
\maketitle
\vspace{-1em}
\tableofcontents

\section*{Introduction}

The composition of two polynomial representations of $GL_N(\C)$ is reflected
by the plethysm operation on the ring of symmetric functions. The plethysm of
two Schur functions expands in the Schur basis as
$s_\lambda[s_\mu]=\sum_{\nu}c_{\lambda\mu}^\nu s_\nu$, and the coefficients
$c_{\lambda\mu}^\nu$ are known as \emph{plethysm coefficients}. Finding a
combinatorial interpretation of plethysm coefficients is a well-known open
problem in algebraic combinatorics \cite{colmenarejo}. Partial results
include a description of the hook and near-hook constituents
\cite{langley-remmel}, the lexicographically largest term of a plethysm of
Schur functions \cite{ossz-quasi}, and obstructions to expressing plethysm
coefficients as counting functions of lattice points in polytopes
\cite{kahle-michalek}. Throughout,
$\langle\cdot,\cdot\rangle$ denotes the Hall inner product, for which the
Schur functions are orthonormal, so that $\langle h_n[h_m], s_\nu\rangle$ is
the coefficient of $s_\nu$ in $h_n[h_m]$. Since the character of
$\mathrm{Sym}^n(\mathrm{Sym}^m(\C^N))$ is $h_n[h_m](x_1,\dots,x_N)$, this
coefficient is the multiplicity of the Weyl module $W_\nu(\C^N)$ in
$\mathrm{Sym}^n(\mathrm{Sym}^m(\C^N))$ for every $N\ge \ell(\nu)$. For
background on plethysm and symmetric functions we refer to
\cite[p.~135]{macdonald} and \cite{stanley}.

\medskip
\noindent\textbf{Stability.} To better understand plethysm coefficients, many
authors have studied their stability properties. Foulkes \cite{foulkes}
conjectured that for all $m$, $n$ and $\lambda$, the coefficient
$\langle h_n[h_{m+d}],s_{\lambda+(nd)}\rangle$ becomes constant as
$d\to\infty$; this was proved by Brion \cite{brion} using algebro-geometric
methods; Colmenarejo \cite{colmenarejo-stability} later gave a combinatorial
proof, decomposing plethysm coefficients into alternating sums of numbers of
integer points in polytopes. Foulkes \cite{foulkes} also conjectured that $h_n[h_m]-h_m[h_n]$ is
Schur-positive for $n\ge m$, a conjecture that remains open. For a partition
$\lambda$, write $\lambda[mn]=(mn-|\lambda|,\lambda_1,\lambda_2,\dots)$.
Using the representation theory of partition algebras, Bowman and Paget
\cite{bow-pag} proved that $\langle h_n[h_m],s_{\lambda[mn]}\rangle$ is
independent of $m$ and $n$ for $m,n\ge|\lambda|$; in particular,
$\langle h_n[h_m],s_{\lambda[mn]}\rangle=\langle h_m[h_n],s_{\lambda[mn]}\rangle$
in this range. We call this common value the \emph{stable Foulkes
coefficient} of $\lambda$. Further stability results for plethysm
coefficients were obtained by Law and Okitani \cite{law-okitani}, who
verified a conjecture of Wildon and a stability prediction of Bessenrodt,
Bowman and Paget, and Paget and Wildon \cite{paget-wildon} recently proved two
general stability theorems for plethysms of Schur functions that recover many
earlier results.

\medskip
\noindent\textbf{Our results.} We first observe that the plethysm
coefficients of $h_n[h_m]$ are signed sums of vector-partition functions, and
that simple stability properties of vector partitions give short proofs of
the theorems of Bowman--Paget and of Brion. This yields a formula for the
stable Foulkes coefficient as a signed sum of vector-partition functions with
all parts of size at least~$2$ (Theorem~\ref{thmA}), which we identify with
$\langle s_\lambda, h_{\lfloor|\lambda|/2\rfloor}[H-h_1]\rangle$, where
$H=\sum_{i\ge0}h_i$, via an identity between signed sums of vector-partition
functions (Theorem~\ref{thmB}).

We then interpret the stable value representation-theoretically. Narayanan,
Paul, Prasad and Srivastava \cite{narayanan} constructed a \emph{polynomial
induction} functor from representations of $S_n$ to polynomial
representations of $GL_n(\C)$, right adjoint to restriction, and used it to
give a representation-theoretic proof of Littlewood's plethystic formula for
restriction coefficients. We modify this construction to realize the stable
Foulkes coefficient as the multiplicity of $W_\lambda$ in an explicit
$GL_{\lfloor|\lambda|/2\rfloor}(\C)$-module that does not depend on $m$ or $n$
(Theorem~\ref{thmC}).

Finally, using the action of the skewing operators $h_r^\perp$, we extend the
range of stability in the theorem of Bowman and Paget to $m\ge\lambda_1$ and
$n\ge\lfloor(|\lambda|+\lambda_1)/2\rfloor$, and show that the stable value
equals $\langle s_\lambda, H[H-1-h_1]\rangle$ (Theorem~\ref{thmD}). For
example, for $\lambda=(2,2)$ the coefficient is already stable for $m\ge2$ and
$n\ge3$, rather than $m,n\ge4$ (see Table~\ref{table-22}).

\medskip
\noindent\textbf{Main results.} In what follows, for a $GL_N(\C)$-module $V$,
we write $\langle W_\nu, V\rangle$ for the multiplicity of $W_\nu(\C^N)$ in $V$. Let $S_n$ denote the group of permutations on $n$ letters, and $\mathrm{sgn}(\sigma)$ the sign of a permutation $\sigma$.

\begin{thmx}\label{thmA}
The double sequence $\{\langle h_n[h_m],s_{\lambda[mn]}\rangle\}_{m,n\ge1}$ is
constant for $m,n\geq |\lambda|$, with value
$$\sum_{\sigma\in S_{\ell(\lambda)}}\sgn(\sigma)\,
p^{\geq 2}\big(\lambda_1+\sigma(1)-1,\lambda_2+\sigma(2)-2,\dots,
\lambda_{\ell(\lambda)}+\sigma(\ell(\lambda))-\ell(\lambda)\big),$$
where $p^{\ge 2}(\alpha)$ is the number of vector partitions of $\alpha$ with
all parts of size at least~$2$ (Notation~\ref{notation-p-geq-2}).
\end{thmx}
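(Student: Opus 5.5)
We express the plethysm coefficient as a signed sum of weight multiplicities and then use the fact that these multiplicities are vector-partition counts. By the Jacobi--Trudi / Weyl character formula, for any symmetric function $f$ of degree $mn$ and any partition $\nu$ with at most $N$ parts,
\[
\langle f, s_\nu\rangle=\sum_{\sigma\in S_N}\sgn(\sigma)\,[x^{\nu+\delta-\sigma(\delta)}]f(x_1,\dots,x_N),
\]
where $\delta=(N-1,\dots,0)$, or equivalently the coefficient of $x^{\nu+\rho-\sigma\rho}$. We apply this with $f=h_n[h_m]$, $\nu=\lambda[mn]$ and $N=\ell(\lambda)+1$. The monomial coefficients of $h_n[h_m]$ have a direct description: $[x^\beta]h_n[h_m]$ is the number of multisets of $n$ monomials of degree $m$ with product $x^\beta$. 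We label the first variable $x_0$ and keep track only of the exponents of $x_1,\dots,x_\ell$. Each degree-$m$ monomial is then a vector $v\in\Z_{\ge0}^{\ell}$ with $|v|\le m$, the $x_0$-exponent being $m-|v|$. So $[x^\beta]h_n[h_m]$ counts multisets of $n$ such vectors (zero allowed) summing to $(\beta_1,\dots,\beta_\ell)$. The $x_0$ coordinate is then forced by homogeneity.

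\textbf{Step 1: stabilization.} Removing the zero vectors, such a multiset becomes a vector partition of $\alpha=(\beta_1,\dots,\beta_\ell)$ with at most $n$ nonzero parts, each of size at most $m$. The weights arising from $\lambda[mn]$ have $|\alpha|$ bounded by $|\lambda|$ plus a shift from the $\sigma$-terms. For $m,n\ge|\alpha|$ both constraints are vacuous, so the count equals $p(\alpha)$, the unrestricted vector-partition number. This needs care. In the alternating sum over $S_{\ell+1}$, permutations moving index $0$ give $\alpha$ with $|\alpha|$ around $|\lambda|+1$ or larger, possibly exceeding $m$. I would handle this by writing $s_{\lambda[mn]}$ via a Jacobi--Trudi expansion in the first row only, i.e.\ $s_{\lambda[mn]}=\sum_{j}(-1)^j h_{mn-|\lambda|+\ldots}\cdots$. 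The cleaner route is to use the ``$x_0$-free'' formulation $\langle f,s_{\lambda[mn]}\rangle=\sum_{\sigma\in S_\ell}\sgn(\sigma)\,\big(c(\lambda+\rho-\sigma\rho)-c(\text{raised by one in each coordinate sum})\big)$. Concretely, I would use the standard identity that the multiplicity of $s_{\lambda[mn]}$ equals the alternating sum over $\sigma\in S_\ell$ of $q(\alpha_\sigma)-q(\alpha_\sigma')$-type differences, where $q$ counts restricted vector partitions. The result should be that the answer is $\sum_\sigma\sgn(\sigma)\,D(\lambda+\rho_\ell-\sigma\rho_\ell)$, where $D(\alpha)$ is the first difference counting partitions with no part equal to a unit vector $e_i$ being removed \dots

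\textbf{Step 2: identify the difference with $p^{\ge2}$.} The key identity I expect is as follows. Summing over the choice of $x_0$ involvement (the permutations of $S_{\ell+1}$ moving $0$) amounts to multiplying the generating function $\prod_{v\ne0}(1-x^v)^{-1}$ by $\prod_i(1-x_i)$. This is exactly because the $\sigma$ with $\sigma(0)\neq0$ contribute shifts by unit vectors with alternating signs, which reproduces the $\ell\times\ell$ minors of the Weyl denominator. The product $\prod_i(1-x_i)$ cancels the factors for the parts $e_i$ of size $1$, leaving $\prod_{|v|\ge2}(1-x^v)^{-1}$, whose coefficients are $p^{\ge2}$. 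Precisely, the sum over $S_{\ell+1}$ factors as a sum over $S_\ell$ together with an inclusion–exclusion over the unit vectors, and this yields the stated formula.

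\textbf{Main obstacle.} The delicate point is the bookkeeping in Step 2 together with checking that $m,n\ge|\lambda|$ suffices in Step 1. For the latter, every vector partition counted by $p^{\ge2}$ at the relevant weights has at most $|\lambda|/2$ parts, each of size at most $|\lambda|$. Adding back the size-one parts still gives at most $|\lambda|$ parts, so the restrictions by $n$ and $m$ become vacuous precisely when $m,n\ge|\lambda|$.
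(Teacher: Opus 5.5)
Your architecture is the paper's, and your Step~2 is its Vandermonde/inclusion--exclusion argument in generating-function form. The steps are: read the monomial coefficients of $h_n[h_m]$ as vector partitions; strip the $x_0$-coordinate to obtain $p_n^{\le m}$ (Lemma~\ref{stability-of-vecpar}); replace $p_n^{\le m}$ by $p$; and cancel the unit-vector parts (Lemmas~\ref{determinantal-identity} and~\ref{partition-without-e-i}). Step~2 should be stated precisely: the $S_{\ell+1}$-sum is $[x^{\lambda+\delta_\ell}]\prod_{v\neq0}(1-x^v)^{-1}\,a_{\delta_{\ell+1}}(1,x_1,\dots,x_\ell)$, where $a_\delta$ is the alternant $\sum_{\sigma}\sgn(\sigma)\,x^{\sigma\delta}$. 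Then $a_{\delta_{\ell+1}}(1,x)=\prod_i(1-x_i)\,a_{\delta_\ell}(x)$, and $\prod_i(1-x_i)$ kills exactly the factors for the parts $e_i$. That part is fine.

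The gap is in Step~1, and it comes from a sign error. You claim that permutations moving the index $0$ produce vectors $\alpha$ with $|\alpha|\ge|\lambda|+1$, possibly exceeding $m$. The opposite holds. Since $\delta_0=\ell$ is the largest entry of $\delta$, the $x_0$-coordinate of $\lambda[mn]+\delta-\sigma\delta$ is at least $mn-|\lambda|$. Hence the remaining coordinates satisfy $|\alpha|\le|\lambda|$, strictly when $\sigma$ moves $0$; in Jacobi--Trudi indexing, $|\alpha|=|\lambda|-(\sigma(1)-1)$. So Lemma~\ref{no-role-of-m-and-n} applies to every term of the $S_{\ell+1}$-sum once $m,n\ge|\lambda|$, and stabilization is immediate. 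This one-line size bound is the missing step.

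Because of the sign error, you instead propose a first-row expansion or ``$x_0$-free'' difference formula that you never carry out; the definition of $D$ trails off. As written, the replacement of $p_n^{\le m}$ by $p$ is therefore not justified. Your ``Main obstacle'' paragraph does not fix this. It bounds the parts of the $p^{\ge2}$-partitions at weights of size $|\lambda|$. But the $m,n$ constraints have to be removed from the $S_{\ell+1}$-sum of restricted counts \emph{before} the unit-vector cancellation. That cancellation does not commute with a bound on the number of parts, because removing a unit part changes the part count. What you actually need is $|\alpha|\le|\lambda|$ for every vector in the $S_{\ell+1}$-sum, which is exactly the fact you asserted fails.
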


\noindent Theorem~\ref{thmA} follows from Theorem~\ref{proof1} and
Theorem~\ref{equality-of-vector-partitions}. Its formula resembles the formula of
\cite[Corollary~4.2]{narayanan} for a restriction coefficient:
$$\dim\big(W_\lambda(\C^n)^{S_n}\big)=\sum_{\sigma\in S_{n}}\sgn(\sigma)\,
p_n\big((\lambda_i+\sigma(i)-i)_{1\le i\le n}\big).$$
Our proof of the Bowman--Paget theorem first expresses the stable value as a
signed sum over $S_{\ell(\lambda)+1}$ of the full vector-partition function
$p(\cdot)$. The passage to the signed sum of $p^{\ge2}(\cdot)$ over
$S_{\ell(\lambda)}$ in Theorem~\ref{thmA}, which corresponds to removing $h_1$
from $H$, is given by the following identity.

\begin{thmx}\label{thmB}
For every vector $\lambda=(\lambda_1,\dots,\lambda_k)$,
\begin{align*}
&\sum_{\sigma\in S_{k+1}}\sgn(\sigma)\,
p\big(\lambda_1+\sigma(2)-2,\lambda_2+\sigma(3)-3,\dots,\lambda_k+\sigma(k+1)-(k+1)\big)\\
&\qquad=\sum_{\sigma\in S_k}\sgn(\sigma)\,
p^{\geq 2}\big(\lambda_1+\sigma(1)-1,\lambda_2+\sigma(2)-2,\dots,\lambda_k+\sigma(k)-k\big),
\end{align*}
where $p(\cdot)$ is the vector-partition function (Notation~\ref{notation-vec-par}).
\end{thmx}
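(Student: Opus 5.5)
The plan is to pass to generating functions. Vector partitions of $\alpha\in\Z_{\ge0}^k$ are counted by
$$P(x)=\sum_{\alpha}p(\alpha)x^\alpha=\prod_{0\ne\beta\in\Z_{\ge0}^k}\frac{1}{1-x^\beta},$$
and $p^{\ge2}$ is counted by $P^{\ge2}(x)=P(x)\prod_{i=1}^k(1-x_i)$, since the only nonzero vectors of size $1$ are the unit vectors $e_i$. Extend $p$ and $p^{\ge2}$ by zero to vectors with a negative entry; this convention is implicit in the statement. A signed sum $\sum_{\sigma}\sgn(\sigma)\,f(\lambda+\sigma-\mathrm{id})$ is the coefficient of $x^{\lambda}$ in $a(x)F(x)$, where $F$ is the generating function of $f$ and $a$ is a Vandermonde-type alternant. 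I will show that the two alternants appearing in Theorem~\ref{thmB}, multiplied by the appropriate generating functions, give the same series.

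\textbf{Right-hand side.} We have $\sum_{\sigma\in S_k}\sgn(\sigma)\,p^{\ge2}(\lambda_i+\sigma(i)-i)=[x^\lambda]\,\Delta_k(x)P^{\ge2}(x)$. Here
$$\Delta_k(x)=\sum_{\sigma\in S_k}\sgn(\sigma)\prod_i x_i^{i-\sigma(i)}=\det\big(x_i^{\,i-j}\big)_{i,j\le k},$$
which is a Laurent polynomial. Multiplying row $i$ by $x_i^{k-i}$ turns it into a genuine Vandermonde determinant, and one gets $\Delta_k=\prod_{i<j}(1-x_j/x_i)$ up to a sign convention that I will fix once.

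\textbf{Left-hand side.} The argument $\lambda_i+\sigma(i+1)-(i+1)$ involves $\sigma\in S_{k+1}$ restricted to positions $2,\dots,k+1$, with position $1$ free. So the left side is $[x^\lambda]\,D(x)P(x)$, where
$$D(x)=\det\begin{pmatrix}1&\cdots&1\\ x_i^{\,i+1-j}\end{pmatrix}_{i\le k,\ j\le k+1}.$$
This is the $(k+1)\times(k+1)$ determinant with first row all ones and row $i+1$ equal to $(x_i^{\,i+1-j})_j$. It is the Vandermonde in the variables $x_0=1,x_1,\dots,x_k$ after the same normalisation, so
$$D=\prod_{i}(1-1/x_i)\cdot\prod_{i<j}(1-x_j/x_i)$$
up to the same sign and monomial factors. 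The key identity then reads
$$D\,P=\Delta_k\prod_i(1-x_i^{-1})\,P.$$
This differs from $\Delta_k\prod_i(1-x_i)P$ by the factor $\prod_i(-x_i^{-1})$, a monomial shift that must be absorbed by the index bookkeeping. The shift is exactly why the left side uses $\sigma(i+1)-(i+1)$ rather than $\sigma(i)-i$, and I will check carefully that the exponents align.

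\textbf{Expected obstacle.} The main difficulty is not the algebra but making the formal manipulation legitimate, because the alternants are Laurent polynomials with negative exponents. The identity $[x^\lambda](D P)=\sum_\sigma\sgn(\sigma)\,p(\cdots)$ holds only with the convention $p=0$ off the orthant, and it requires each product to be a well-defined series. This is fine, since a Laurent polynomial times a power series is well defined coefficientwise. Concretely, I will expand the first-row cofactors of $D$ (Laplace expansion) and match them term by term with $\Delta_k\prod_i(1-x_i)$. Equivalently, write $\prod_i(1-x_i)=\sum_{S}(-1)^{|S|}x^{e_S}$ and check that the resulting signed shifts of $\lambda$ reproduce the $S_{k+1}$ signed sum. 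This last term-by-term matching is where sign and index errors are likeliest, so I would first verify it for $k=1$ and $k=2$.
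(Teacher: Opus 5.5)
Your framework is the paper's argument in generating-function form. Multiplying by $x_i^{-1}$ plays the role of the paper's shift operator $E_i$. The factor $\prod_i(1-x_i)$ is the inclusion--exclusion of Lemma~\ref{partition-without-e-i}. $D$ is the Vandermonde determinant with an extra variable $x_0=1$, as in Lemma~\ref{determinantal-identity}.

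However, the step you call the key identity is false, and the proposed remedy cannot work. It is not true that $DP=\Delta_k\prod_i(1-x_i^{-1})P$. A leftover factor $\prod_i(-x_i^{-1})$ also cannot be ``absorbed by the index bookkeeping'', because both sides of the theorem are read off at the same $\lambda$. Such a factor would make your left side equal to $(-1)^k$ times the right side evaluated at $\lambda+(1,\dots,1)$. That already fails for $k=1$, $\lambda_1=2$: you would get $p(2)-p(3)=-1$, whereas the true left side is $p(2)-p(1)=1=p^{\ge2}(2)$. The shift $\sigma(i+1)-(i+1)$ is already built into the definition of $D$, so there is nothing left over to absorb.

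The correct computation goes as follows. Index the rows of $D$ by $0,\dots,k$ with $x_0=1$, and multiply row $i$ by $x_i^{k-i}$. This gives $\det\big(x_i^{\,k+1-j}\big)=\prod_{0\le i<j\le k}(x_i-x_j)$. Dividing back yields
\begin{equation*}
D=\prod_{0\le i<j\le k}\big(1-x_j/x_i\big),
\end{equation*}
which is exactly the normalisation you used for $\Delta_k$. Since $x_0$ carries the smallest index, the new factors are $1-x_j/x_0=1-x_j$, not $1-1/x_j$. Hence $D=\Delta_k\prod_{i=1}^k(1-x_i)$ exactly, with no sign or monomial; for example, $D=1-x_1$ when $k=1$. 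It follows that $DP=\Delta_kP^{\ge2}$, and comparing coefficients of $x^\lambda$ proves the statement.

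The factor $\prod_i(1-E_i^{-1})$ that appears in the paper is this same $\prod_i(1-x_i)$ written in the operator variables, where $E_i$ corresponds to $x_i^{-1}$. Mixing the two dictionaries is the likely source of your spurious shift.
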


\noindent Theorem~\ref{thmB} is proved as
Theorem~\ref{equality-of-vector-partitions}.

\begin{thmx}\label{thmC}
For all $m,n\ge|\lambda|$ and $N\ge\ell(\lambda)+1$,
$$\langle W_{\lambda[mn]},\mathrm{Sym}^n(\mathrm{Sym}^m(\C^N))\rangle
=\langle W_\lambda(\C^{\lfloor|\lambda|/2\rfloor}),
\mathrm{Ind}_{\hat{1}}^{|\lambda|}\ _{S_{\lfloor|\lambda|/2\rfloor}}^{GL_{\lfloor|\lambda|/2\rfloor}}\mathbf{1}\rangle,$$
where the modified induction on the right-hand side is defined in
Equation~\eqref{def-modified-poly-ind}.
\end{thmx}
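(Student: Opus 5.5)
By Theorem~\ref{thmA}, for $m,n\ge|\lambda|$ the left-hand side equals the stable Foulkes coefficient. By the identification announced in the introduction, this equals $\langle s_\lambda, h_{k}[H-h_1]\rangle$ with $k=\lfloor|\lambda|/2\rfloor$. It therefore suffices to show that the character of the modified polynomial induction $\mathrm{Ind}_{\hat 1}^{|\lambda|}\,{}_{S_k}^{GL_k}\mathbf 1$ is the degree-$|\lambda|$ part of $h_k[H-h_1](x_1,\dots,x_k)$. We also need the Schur expansion of this part to involve only partitions with at most $k$ parts, so that nothing is lost by evaluating in $k$ variables. This is automatic for the $s_\lambda$ we care about: $\lambda$ is a partition of $|\lambda|$, and its parts satisfy $\ell(\lambda)\le |\lambda|$. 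The real issue is whether the multiplicity of $W_\lambda(\C^k)$ equals the coefficient of $s_\lambda$. This holds whenever $\ell(\lambda)\le k$.

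\textbf{Character computation.} Take the definition \eqref{def-modified-poly-ind}: the space of degree-$|\lambda|$ polynomial functions on $k\times k$ matrices $X$ that are invariant under permuting rows and contain no monomial of degree $1$ in any single row, with $GL_k$ acting by right translation. A monomial basis of polynomials on $M_k$ is indexed by $k$-tuples of row exponent vectors $(\alpha^{(1)},\dots,\alpha^{(k)})$ with $\alpha^{(i)}\in\Z_{\ge0}^k$. The torus acts by $x^{\alpha^{(1)}+\cdots+\alpha^{(k)}}$ under right translation by $\mathrm{diag}(x_1,\dots,x_k)$.

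\begin{itemize}
\item The condition ``no monomial of degree $1$ in a row'' cuts out the span of the monomials with $|\alpha^{(i)}|\ne1$ for every $i$. This span is stable under $GL_k$ acting on the right, because right translation preserves the degree in each row separately.
\item The $S_k$-invariants of this span are spanned by orbit sums of monomials. These are in bijection with multisets of $k$ vectors in $\Z_{\ge0}^k$, each of size $0$ or at least $2$.
\end{itemize}

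Taking characters, each row contributes $\sum_{j\ne1} h_j(x_1,\dots,x_k)=(H-h_1)(x)$. Forming the $S_k$-invariants of the $k$-fold tensor power, i.e. $\mathrm{Sym}^k$, corresponds exactly to applying the plethysm $h_k[\cdot]$. The degree-$|\lambda|$ component therefore has character $h_k[H-h_1]$ restricted to degree $|\lambda|$ and evaluated in $k$ variables. Here one uses that $\mathrm{Sym}^k$ of a graded polynomial representation $V$ has character $h_k[\mathrm{ch}V]$.

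The same counting reproduces the vector-partition description. Vector partitions of $\alpha$ with at most $k$ parts, all of size at least $2$, are padded by zero rows. Since $2k\le|\lambda|$ would be needed for exactly $k$ nonzero rows, every partition of $\alpha$ into parts of size at least $2$ has at most $\lfloor|\lambda|/2\rfloor=k$ parts. Hence $k$ rows suffice, and $h_k[H-h_1]$ in degree $|\lambda|$ agrees with $H[H-1-h_1]$ in that degree. This also explains the choice of $k$.

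\textbf{The obstacle.} The main obstacle is ensuring $\ell(\lambda)\le k$, so that $\langle W_\lambda(\C^k),\cdot\rangle$ is the Schur coefficient. This can fail: for $\lambda=(1,1,1)$ we have $k=1$. To handle it, I would check that the coefficient of $s_\lambda$ in $h_k[H-h_1]$ vanishes whenever $\ell(\lambda)>k$. With $\ell(\lambda)>k$, the quantity $\langle s_\lambda, h_k[H-h_1]\rangle$ should equal the dimension of the $\lambda$-isotypic piece in $N\ge\ell(\lambda)$ variables.

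I would aim to show this directly: every Schur constituent of $\mathrm{Sym}^k(\bigoplus_{j\ne1}\mathrm{Sym}^j\C^N)$ has at most $k$ rows. A $k$-fold product of single-row Schur functions $h_{j_1}\cdots h_{j_k}$ has only constituents of length at most $k$, by Pieri's rule. Since $\mathrm{Sym}^k$ is a summand of the tensor power, the claim follows, and both sides vanish or agree. Finally, $N\ge\ell(\lambda)+1$ is used only so that $W_{\lambda[mn]}(\C^N)$ makes sense and the multiplicity equals the plethysm coefficient.
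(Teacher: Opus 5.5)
Your proposal is correct and follows the paper's proof. You identify the left side with $\langle h_n[h_m],s_{\lambda[mn]}\rangle$ using $N\ge\ell(\lambda)+1$, apply Theorem~\ref{omitting-h-1} to rewrite it as $\langle s_\lambda,h_k[H-h_1]\rangle$ with $k=\lfloor|\lambda|/2\rfloor$, and compute the character of the modified induction as $(h_k[H-h_1])_{|\lambda|}$ in $k$ variables, just as in Proposition~\ref{Equal-inner-products}. Your Pieri-rule argument that $\langle s_\lambda,h_k[H-h_1]\rangle=0$ when $\ell(\lambda)>k$ is a genuine addition: the paper's proof skips this case and only states the vanishing afterwards as a consequence. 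Your earlier remark that the case is ``automatic'' because $\ell(\lambda)\le|\lambda|$ does not follow, but the later argument replaces it.
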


\noindent Theorem~\ref{thmC} is proved as Theorem~\ref{final-corollary}.

\begin{thmx}\label{thmD}
For $m\ge\lambda_1$ and $n\ge\lfloor(|\lambda|+\lambda_1)/2\rfloor$, whenever
$\lambda[mn]$ is a partition,
$$\langle s_{\lambda[mn]}, h_n[h_m]\rangle=\langle s_\lambda,H[H-1-h_1]\rangle$$ where $H=\sum\limits_{i\ge 0 }h_i$.
\end{thmx}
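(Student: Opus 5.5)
Throughout, write $\ell=\ell(\lambda)$ and $k=|\lambda|$, and let $h_r^\perp$ denote the skewing operator adjoint to multiplication by $h_r$.

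The plan is to remove the long first row of $\lambda[mn]$ with the Jacobi--Trudi identity. Expanding $s_{\lambda[mn]}$ along its first row gives
\[
s_{\lambda[mn]}=\sum_{j\ge 0}(-1)^j\, h_{mn-k+j}\, s_{\lambda/(1^j)}^{\ast},
\]
where $s^{\ast}$ is the appropriate skew (or $e$-twisted) Schur function in the remaining rows. Equivalently, $s_{\lambda[mn]}=\sum_{j}(-1)^j h_{mn-k+j}\,(e_j^\perp s_\lambda)$, which is valid as long as $mn-k\ge\lambda_1$. Taking the Hall inner product with $h_n[h_m]$, the coefficient becomes
\[
\sum_j(-1)^j\bigl\langle h_{mn-k+j}^\perp\, h_n[h_m],\ e_j^\perp s_\lambda\bigr\rangle.
\]
So everything reduces to understanding $h_r^\perp$ applied to $h_n[h_m]$ when $r$ is close to $mn$.

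For that I will use the plethystic coproduct rule $h_r^\perp(f[g])$. Since $h_r^\perp$ corresponds to the coproduct of $H$, we have $\sum_r h_r^\perp(F)\,t^r=F[X+t]$. Applied to $h_n[h_m]$ this gives
\[
h_n[h_m][X+t]=h_n\Bigl[\sum_{i=0}^m h_i(X)\,t^{m-i}\Bigr].
\]
The coefficient of $t^{mn-d}$ therefore involves only $h_i(X)$ with small $i$, weighted by how many factors ``lose'' $i$. The result is $h_{mn-d}^\perp h_n[h_m]=\langle t^{-d}\rangle\, h_n[1+h_1t^{-1}+\dots+h_m t^{-m}]$, i.e.\ the degree-$d$ part of $\sum_{j\le n} h_{j}\bigl[h_1+h_2+\dots+h_m\bigr]$. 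Here the $h_0=1$ terms absorb the remaining factors, and $j\le n$ counts the factors carrying positive degree.

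When $m\ge\lambda_1$ and $d\le k$, the truncation at $h_m$ is harmless for pairing against $e_j^\perp s_\lambda$: every constituent has first row at most $\lambda_1$, and terms $h_i$ with $i>\lambda_1$ only contribute Schur functions with a row longer than $\lambda_1$. So this becomes the degree-$d$ part of $\sum_{j\le n}h_j[H-1]$.

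Substituting back, the alternating sum over $j$ with $e_j^\perp$ is the adjoint of multiplying by $\sum_j(-1)^je_j$, which is $H^{-1}$ evaluated in degree zero. Combined with the identity $H^{-1}\cdot\sum_j h_j[H-1]=\sum_j h_j[H-1-h_1]$, this is the plethystic identity $H[A+B]=H[A]H[B]$ with $B=h_1$, and $H[h_1]=H$. The coefficient therefore becomes $\langle s_\lambda,\ \sum_{j\le n'}h_j[H-1-h_1]\rangle$, where the truncation bound $n'$ comes from $j\le n$.

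The main obstacle is exactly this bookkeeping of truncations. Multiplying by $H^{-1}$ mixes the truncation $j\le n$ of the $h_1$-factors with those of the other factors. I need to show the truncated and untruncated sums agree on $s_\lambda$ once $n\ge\lfloor (k+\lambda_1)/2\rfloor$. The heuristic is as follows. In $h_j[H-1-h_1]$ every part has degree at least $2$, so at most $\lfloor k/2\rfloor$ factors of $H-1-h_1$ appear in degree $k$. At most $\lambda_1$ factors of $h_1$ can be present without exceeding the $\lambda_1$ columns bound coming from $e_j^\perp$. Counting pairs more carefully — $a$ factors of degree $\ge2$ and $b$ of degree $1$ with $2a+b\le k$ and $b\le\lambda_1$ — gives $a+b\le\lfloor (k+\lambda_1)/2\rfloor$.

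So I would first redo the computation with the truncated $h_n[1+h_1+\dots+h_m]$ kept exactly, and expand into monomials in the factor-degrees. Then I would verify that every term with more than $n$ nonconstant factors pairs to zero with all the $e_j^\perp s_\lambda$. Finally I would check the side condition that $\lambda[mn]$ is a partition, which ensures the Jacobi--Trudi expansion above is valid.
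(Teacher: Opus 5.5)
Your proposal is correct and follows the paper's proof essentially step for step. The shared steps are: the first-row Jacobi--Trudi expansion; the identification of $h_r^\perp h_n[h_m]$ with the degree $mn-r$ part of $\sum_{j\le n}h_j[A_m]$; removing the $m$-truncation because the extra terms contain some $h_t[h_i]$ with $t\ge1$, $i>\lambda_1$; removing the $n$-truncation via the count $2a+b\le|\lambda|$, $b\le\lambda_1$ (the $b$ degree-one factors assemble into $h_b[h_1]=h_b$, which forces $\nu_1\ge b$); and finally $H^{-1}H[H-1]=H[H-1-h_1]$. Your intermediate claim that the coefficient equals $\langle s_\lambda,\sum_{j\le n'}h_j[H-1-h_1]\rangle$ for some truncation $n'$ is not literally true, but the plan you settle on---showing that every $h_j[A_m]$ with $j>n$ pairs to zero with each $e_i^\perp s_\lambda$---is exactly the paper's argument and makes that claim unnecessary.
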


\noindent Theorem~\ref{thmD} is proved as Theorem~\ref{thm-perp}.

The closed form in Theorem~\ref{thmD} makes the stable value computable for
partitions with few columns. For a partition $\lambda$, let $m_i=m_i(\lambda)$
denote the number of parts of $\lambda$ equal to $i$, and write
$x_+=\max(x,0)$.

\begin{thmx}\label{thmE}
Let $\lambda=(4^{m_4}3^{m_3}2^{m_2}1^{m_1})$ be a partition with
$\lambda_1\le4$. The stable Foulkes coefficient of $\lambda$ is zero if
$m_1\ge2$ or $m_3\ge2$, and otherwise equals
$$
\begin{cases}
(m_4+1)+(m_4-1)_++(m_4-2)_++m_4\cdot[m_2\ge1] & \text{if } (m_1,m_3)=(0,0),\\[4pt]
m_4+(m_4-1)_+ & \text{if } (m_1,m_3)=(1,0),\\[4pt]
(m_4+1)+(m_4-1)_+ & \text{if } (m_1,m_3)=(0,1),\\[4pt]
m_4 & \text{if } (m_1,m_3)=(1,1),
\end{cases}
$$
where $[m_2\ge1]$ is $1$ if $m_2\ge1$ and $0$ otherwise. It is also the number
of lattice points of an explicit polytope (Theorem~\ref{thm-four-columns}).
\end{thmx}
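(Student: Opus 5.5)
Since Theorem~\ref{thmD} identifies the stable value with $\langle s_\lambda, H[H-1-h_1]\rangle$, the plan is to compute this inner product directly for $\lambda_1\le 4$. Put $G=H-1-h_1=\sum_{i\ge2}h_i$. Then
$$H[G]=\sum_{(a_2,a_3,\dots)}\prod_{i\ge2}h_{a_i}[h_i],$$
so $\langle s_\lambda,H[G]\rangle$ is a sum over compositions of $|\lambda|$ into parts of size at least $2$ of Littlewood--Richardson products of the Schur expansions of the plethysms $h_a[h_i]$. When $\lambda_1\le 4$, every Schur function $s_\mu$ that contributes has $\mu\subseteq\lambda$, so $\mu_1\le 4$. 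This forces $h_i$ with $i\le 4$ only: a constituent of $h_a[h_i]$ has first part at least $i$, and Littlewood--Richardson coefficients never decrease the first part. It also bounds which constituents of $h_a[h_2],h_a[h_3],h_a[h_4]$ can appear.

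The first step is to record the part of each relevant plethysm with first row at most $4$. By Littlewood's formula, $h_a[h_2]=\sum s_{2\mu}$ over partitions $\mu$ of $a$; the terms with first part at most $4$ are $s_{(4^p2^q)}$. For $h_a[h_3]$ and $h_a[h_4]$ the truncation to $\mu_1\le 4$ should be small. The dominant-weight/transpose argument suggests using $\langle s_\mu, h_a[h_i]\rangle$ for $\mu$ with at most $4$ columns, equivalently $\omega$-transposing to partitions with at most $4$ rows. Those are computable from $GL_4$ invariant theory, or from Theorem~\ref{thmA} with $\ell(\lambda)$ replaced by column counts. I expect that $h_a[h_3]$ truncated to width $4$ contributes only $s_{(3)}$ when $a=1$, and terms like $s_{(4,4,\dots,4,2)}$ or $s_{(4^{p}3\,1)}$-type shapes otherwise. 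These small constituent lists are where the conditions $m_1\le 1$ and $m_3\le 1$ will come from.

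The second step is to multiply these truncated pieces with the Pieri and Littlewood--Richardson rule restricted to width $4$. Here $\lambda$ is a column-type shape $(4^{m_4}3^{m_3}2^{m_2}1^{m_1})$. Transposing, it is cleaner to view everything in terms of $\lambda'=(\ell_1,\ell_2,\ell_3,\ell_4)$ with $\ell_j=\sum_{i\ge j}m_i$. Products then become Littlewood--Richardson fillings with at most $4$ rows after applying $\omega$, and these are governed by linear inequalities. Each contribution is encoded as a lattice point: a choice of exponents $(a_2,a_3,a_4)$, a choice of constituent in each factor, and an LR filling of bounded-row shapes. The total is the number of integer points of a polytope cut out by the LR (hive or Berenstein--Zelevinsky) inequalities together with the constraints from the constituent lists. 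Counting these points case by case in $(m_1,m_3)$ should give the piecewise-linear expressions in $m_4$ and $[m_2\ge 1]$. The terms $(m_4-1)_+$ and $(m_4-2)_+$ arise from lower bounds on how many rows of length $4$ must come from $h_a[h_2]$ versus $h_a[h_4]$.

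The main obstacle is the first step: pinning down exactly the width-$\le 4$ constituents, with multiplicities, of $h_a[h_3]$ and especially $h_a[h_4]$ for all $a$. My first attempt will be to transpose via $\omega$, which sends $h_a[h_i]$ to $h_a[e_i]$ or $e_a[e_i]$ depending on parity. This turns the task into finding the $GL_4$-decomposition of $\mathrm{Sym}^a(\Lambda^i\C^4)$ or $\Lambda^a(\Lambda^i\C^4)$. For $i=4$ this is trivial, since $\Lambda^4\C^4$ is one-dimensional, and for $i=3$ it is dual to $\C^4$. For $i=2$, with $\Lambda^2\C^4\cong$ the standard representation of $SO_6$, there is a classical multiplicity-free decomposition. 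This should yield the truncations cleanly. The final step is the case check, including vanishing when $m_1\ge2$ or $m_3\ge2$, which I will verify against small computed values such as $\lambda=(2,2)$.
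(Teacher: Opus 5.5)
Your overall route is the paper's. There, $H[H-1-h_1]=\prod_{i\ge2}H[h_i]$ is transposed by $\omega$ as a whole and specialized to four variables, which discards $i\ge5$ just as your first-row argument does. Then $c(\lambda)$ becomes the multiplicity of $W_{\lambda'}(V)$, $V=\C^4$, in $\mathrm{Sym}(\Lambda^2V)\otimes\Lambda(\Lambda^3V)\otimes\mathrm{Sym}(\Lambda^4V)$, and these factors are decomposed using exactly the facts you list. However, you stop before the computation that actually produces the formula, and your guesses about that computation are off.

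Since $\Lambda^3V\cong V^*\otimes\det$ is $4$-dimensional, $\Lambda^a(\Lambda^3V)\cong\Lambda^{4-a}V\otimes{\det}^{a-1}$ for $0\le a\le4$, and it vanishes for $a\ge5$. Transposing back, the width-$\le4$ part of $h_a[h_3]$ is the single Schur function $s_{(4^{a-1},4-a)}$ for $1\le a\le4$, namely $s_{3},s_{42},s_{441},s_{444}$. It is zero for $a\ge5$, so no $(4^p31)$-type shapes occur. Consequently no hive or Berenstein--Zelevinsky polytope is needed. Each term is $W_{(P,P,Q,Q)}(V)\otimes\Lambda^rV\otimes{\det}^t$ with $r=4-a_3$ and $t=a_3-1+a_4\ge3-r$. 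A single Pieri rule then shows that $c(\lambda)$ counts the pairs $(t,\varepsilon)\in\Z\times\{0,1\}^4$ with $t\ge3-|\varepsilon|$ and $\lambda'-\varepsilon-(t^4)=(P,P,Q,Q)$ for some $P\ge Q\ge0$.

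In terms of the $m_i$, these conditions read
\[
\varepsilon_1-\varepsilon_2=m_1,\quad \varepsilon_3-\varepsilon_4=m_3,\quad \varepsilon_2-\varepsilon_3\le m_2,\quad t+\varepsilon_4\le m_4 .
\]
This gives the vanishing for $m_1\ge2$ or $m_3\ge2$ immediately. The condition $\varepsilon_2-\varepsilon_3\le m_2$ excludes only $\varepsilon=(1,1,0,0)$ when $m_2=0$, which is the source of $[m_2\ge1]$. Listing the few admissible $\varepsilon$, each contributing $(m_4-\varepsilon_4-2+|\varepsilon|)_+$ values of $t$, gives the four cases of the formula. In particular, the terms $(m_4-1)_+$ and $(m_4-2)_+$ come from the lower bound $t\ge3-|\varepsilon|$. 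That bound reflects the $a_3-1$ rows of length $4$ forced by the factor $h_{a_3}[h_3]$, not a split between $h_a[h_2]$ and $h_a[h_4]$.
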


\noindent In particular, if $\lambda_1\le3$, the stable Foulkes coefficient is
$1$ when $\lambda=(2^n)$ or $\lambda=(3,2^n)$ for some $n\ge0$, and $0$
otherwise (Corollary~\ref{cor-three-columns}). Theorem~\ref{thmE} is proved in
Section~\ref{sec-few-columns}, via a reduction to the representation theory
of $GL_4(\C)$.

\medskip
\noindent\textbf{Organization.} In Section~\ref{sec-1} we express the
plethysm coefficients of $h_n[h_m]$ through vector-partition functions. In
Section~\ref{sec-stability} we give short proofs of the theorems of
Bowman--Paget (Theorem~\ref{proof1}) and Brion (Theorem~\ref{proof2}). In
Section~\ref{stable-plethysm} we prove Theorem~\ref{thmB} and complete the
proof of Theorem~\ref{thmA}. In Section~\ref{section-on-polynomial-rep} we
recall polynomial induction from \cite{narayanan}, and in
Section~\ref{modified-poly-rep} we modify it to prove Theorem~\ref{thmC}. In
Section~\ref{stability-using-perp} we use the action of $h_r^\perp$ to prove
Theorem~\ref{thmD}. In Section~\ref{sec-few-columns} we prove
Theorem~\ref{thmE}. Finally, in Section~\ref{sec-sage} we compute some stable
Foulkes coefficients using SageMath.

\section{Combinatorics of vector-partition functions}\label{sec-1}
A \emph{vector partition} of a vector $\alpha=(\alpha_1,\alpha_2,\ldots,\alpha_l)$
is an (unordered) decomposition of $\alpha$ as a sum of non-zero vectors with
non-negative integer entries. For example, the vector partitions of $(2,2)$ are
\begin{alignat*}{2}
\vc{2}{2} &= \vc{2}{2} && \text{(1 part)}\\[4pt]
&= \vc{2}{0}+\vc{0}{2}
 = \vc{1}{0}+\vc{1}{2}
 = \vc{0}{1}+\vc{2}{1}
 = \vc{1}{1}+\vc{1}{1} &\qquad& \text{(2 parts)}\\[4pt]
&= \vc{2}{0}+\vc{0}{1}+\vc{0}{1}
 = \vc{0}{2}+\vc{1}{0}+\vc{1}{0}
 = \vc{1}{1}+\vc{1}{0}+\vc{0}{1} && \text{(3 parts)}\\[4pt]
&= \vc{1}{0}+\vc{1}{0}+\vc{0}{1}+\vc{0}{1} && \text{(4 parts)}
\end{alignat*}
so $(2,2)$ has $9$ vector partitions in total.

\begin{notation}\label{notation-vec-par}
Let $|\alpha|=\sum_i\alpha_i$ denote the size of the vector $\alpha$. Let
$p(\alpha)$ denote the number of vector partitions of $\alpha$, and let
$p_n(\alpha)$ denote the number of vector partitions of $\alpha$ with at most
$n$ parts. Let $p_n^m(\alpha)$ denote the number of vector partitions of
$\alpha$ with at most $n$ parts, all of size $m$, and let
$p_n^{\leq m}(\alpha)$ denote the number of vector partitions of $\alpha$ with
at most $n$ parts, all of size at most $m$.
\end{notation}

\begin{notation}
For a partition $\lambda=(\lambda_1,\lambda_2,\dots,\lambda_l)$ and positive
integers $m,n$ with $mn-|\lambda|\geq\lambda_1$, let $\lambda[mn]$ denote the
padded partition $(mn-|\lambda|,\lambda_1,\lambda_2,\dots,\lambda_l)$. We write
$\ell(\lambda)$ for the length of $\lambda$, and we often regard a partition
$\lambda=(\lambda_1\geq \lambda_2\geq{\dots}\geq \lambda_{\ell(\lambda)})$ as the
vector $(\lambda_1,\lambda_2,\dots,\lambda_{\ell(\lambda)})$.
\end{notation}

\begin{notation}
If $\alpha$ is a vector with a negative component, then $p(\alpha)$,
$p_n(\alpha)$, $p_n^m(\alpha)$ and $p_n^{\le m}(\alpha)$ are defined to be zero.
\end{notation}

Monomial expansions of plethysms of complete symmetric functions are naturally
connected to vector-partition functions.

\begin{lemma}\label{monomial-complete} The monomial expansion of $h_n[h_m]$ is given by
    $$h_n[h_m]=\sum_{\lambda\vdash mn}p_n^m({\lambda})m_\lambda.$$
\end{lemma}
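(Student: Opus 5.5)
The plan is to compute $h_n[h_m]$ directly from the definition of plethysm and read off monomial coefficients. Write $h_m=\sum_{|\beta|=m} x^\beta$, where $\beta$ runs over vectors of non-negative integers of size $m$ (in as many variables as we like). By the monomial definition of plethysm, $h_n[h_m]$ is obtained by substituting these monomials, each with coefficient one, as the variables of $h_n$. Hence $h_n[h_m]=h_n(\{x^\beta\}_{|\beta|=m})$, which is the sum over all multisets $\{\beta^{(1)},\dots,\beta^{(n)}\}$ of exactly $n$ exponent vectors of size $m$ of the monomial $x^{\beta^{(1)}+\dots+\beta^{(n)}}$.

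Fix a partition $\lambda\vdash mn$. Since $h_n[h_m]$ is symmetric, its coefficient on $m_\lambda$ equals the coefficient of $x^\lambda=x_1^{\lambda_1}x_2^{\lambda_2}\cdots$. By the expansion above, this is the number of multisets of $n$ vectors $\beta^{(i)}$, each with non-negative entries and size $m$, summing to the vector $\lambda$. Each $\beta^{(i)}$ is non-zero because $m\ge1$, so such a multiset is exactly a vector partition of $\lambda$ into parts of size $m$. The number of parts is forced: sizes add, so any such vector partition has exactly $|\lambda|/m=n$ parts. Therefore "exactly $n$ parts" coincides with "at most $n$ parts", and the coefficient equals $p_n^m(\lambda)$. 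Summing over $\lambda\vdash mn$ gives the lemma. Every monomial in $h_n[h_m]$ has degree $mn$, so only $\lambda\vdash mn$ occur.

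The only point needing care is the last identification: we must check that the multiset (unordered) nature of the $h_n$ expansion matches the unordered definition of vector partition, including repeated parts. It does, because $h_n$ sums over multisets of variables with multiplicity, and each distinct $\beta$ is a distinct variable. We must also check that the "at most $n$" in Notation~\ref{notation-vec-par} causes no overcount, which the size argument settles. No real obstacle is expected.
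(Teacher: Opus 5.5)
Your proposal is correct and takes essentially the same approach as the paper: expand $h_m$ as a sum of monomials, identify the coefficient of $x^\lambda$ in $h_n[h_m]$ with the number of multisets of $n$ exponent vectors of size $m$ summing to $\lambda$, and recognize these as vector partitions of $\lambda$ into parts of size $m$. Your remark that the size constraint forces exactly $n$ parts, so that the \emph{at most} $n$ in $p_n^m$ causes no overcount, makes explicit a point the paper leaves implicit; the paper's preliminary reduction to $n$ variables is not needed in your version, since you work with arbitrarily many variables.
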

\begin{proof}
The monomial $m_\lambda$ can occur in the monomial expansion of $h_n[h_m]$ only
if $\ell(\lambda)\leq n$, so it is enough to evaluate $h_n[h_m]$ in $n$
variables. Write $h_n[h_m](x_1,x_2,\dots,x_n)=\sum_\beta c_\beta x^\beta$, where
$x^\beta=x_1^{\beta_1}x_2^{\beta_2}\cdots x_n^{\beta_n}$. Since
$h_m=\sum_{|\alpha|=m} x^\alpha$, where $\alpha$ runs over all compositions of
size $m$, the coefficient of $x^\lambda$ in the plethysm is the number of
multisets $\{\gamma^1,\gamma^2,\dots,\gamma^n\}$ of compositions with
$|\gamma^j|=m$ and $\lambda=\sum_{j=1}^n\gamma^j$. This is the number of vector
partitions of $\lambda$ into $n$ parts, each of size $m$.
\end{proof}

Using the Jacobi--Trudi identity \cite[p.~342]{stanley}, we expand $s_\lambda$
in terms of complete symmetric functions and obtain the following theorem.

\begin{theorem}\label{schur-exp-plethysm}
For all $\lambda$ with $\ell(\lambda)\leq n$,
$$\langle h_n[h_m],s_{\lambda}\rangle=\sum_{\sigma\in S_n}\sgn(\sigma)\, p_n^m(\lambda+\omega(\sigma)),$$
where $\omega(\sigma)(i)=\sigma(i)-i$. If $\ell(\lambda)>n$, then
$\langle h_n[h_m],s_{\lambda}\rangle=0$.
\end{theorem}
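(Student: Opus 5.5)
We will combine Lemma~\ref{monomial-complete} with the Jacobi--Trudi identity, using the standard duality between the monomial and complete homogeneous bases. Since $\langle h_\mu, m_\nu\rangle=\delta_{\mu\nu}$, the Hall inner product $\langle f,h_\mu\rangle$ extracts the coefficient of $m_\mu$ in $f$. By Lemma~\ref{monomial-complete} this coefficient is $p_n^m(\mu)$. The coefficient of a monomial is a symmetric function of the exponent vector, so the statement extends to compositions: for any vector $\alpha$ of non-negative integers, $\langle h_n[h_m],h_\alpha\rangle=p_n^m(\alpha)$, where $h_\alpha=\prod_i h_{\alpha_i}$. Here we use that $p_n^m$ is invariant under permuting the coordinates of $\alpha$. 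Both sides are then extended by zero to vectors with a negative entry, matching the conventions $h_r=0$ for $r<0$ and $p(\cdot)=0$ on such vectors.

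Next, take $\lambda$ with $\ell(\lambda)\le n$, padded with zeros to length $n$, and apply Jacobi--Trudi in the $n\times n$ form
\[
s_\lambda=\det\big(h_{\lambda_i-i+j}\big)_{1\le i,j\le n}=\sum_{\sigma\in S_n}\sgn(\sigma)\prod_{i=1}^n h_{\lambda_i+\sigma(i)-i}.
\]
Padding to size $n$ is legitimate because the Jacobi--Trudi determinant is unchanged when zero parts are appended: the extra rows are unitriangular in the relevant block. Pairing with $h_n[h_m]$ and using linearity gives
\[
\sum_{\sigma}\sgn(\sigma)\,\big\langle h_n[h_m],h_{\lambda+\omega(\sigma)}\big\rangle=\sum_\sigma\sgn(\sigma)\,p_n^m(\lambda+\omega(\sigma)).
\]
The one point to check is that the terms where some $\lambda_i+\sigma(i)-i<0$ vanish on both sides, which holds by the conventions above.

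For the case $\ell(\lambda)>n$, we observe that $h_n[h_m]$ is a polynomial in the monomials $m_\mu$ with $\ell(\mu)\le n$, since each term is a product of $n$ monomials. Equivalently, $h_n[h_m]$ is the character of $\mathrm{Sym}^n(\mathrm{Sym}^m(\C^N))$. We then argue in either of two ways:
\begin{itemize}
\item Representation theoretically: this module is a quotient of $(\mathrm{Sym}^m\C^N)^{\otimes n}$, whose Schur constituents have at most $n$ rows by Pieri's rule.
\item Combinatorially: $h_n[h_m]$ lies in the span of products $h_{\mu}$ over compositions $\mu$ with at most $n$ parts, namely $h_{m}^{\otimes}$-type terms from the Schur expansion of $\mathrm{Sym}^n$ restricted to $\prod h$.
\end{itemize}
The cleanest route is the first: $h_n[h_m]$ is Schur-positive and is dominated by $h_m^n$, and $\langle h_m^n,s_\lambda\rangle=K_{\lambda,(m^n)}$, which vanishes when $\ell(\lambda)>n$ because a semistandard tableau with entries in $\{1,\dots,n\}$ has at most $n$ rows. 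Since $h_m^n-h_n[h_m]$ is Schur-positive, being the character of the complement of $\mathrm{Sym}^n$ in the tensor power, the coefficient of $s_\lambda$ in $h_n[h_m]$ vanishes as well.

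The main obstacle is minor. It is the bookkeeping around negative indices and zero-padding in the first claim, and choosing a clean justification for the vanishing when $\ell(\lambda)>n$; the positivity argument above settles the latter.
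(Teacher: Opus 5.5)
Your argument is correct, and for the identity it is the paper's proof. You expand $s_\lambda$ by the $n\times n$ Jacobi--Trudi determinant, pair with $h_n[h_m]$, and use $\langle h_\mu,m_\nu\rangle=\delta_{\mu\nu}$ together with Lemma~\ref{monomial-complete}. Your remarks on zero-padding, reordering compositions and negative indices only make explicit what the paper leaves implicit.

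Where you go beyond the paper is the vanishing for $\ell(\lambda)>n$, for which the paper's proof gives no separate argument. Your comparison with $h_m^n$ is a complete proof: $h_m^n-h_n[h_m]$ is Schur-positive, and $K_{\lambda,(m^n)}=0$ when $\ell(\lambda)>n$. It is the same reasoning the paper uses later in Lemma~\ref{lam-1-ge-m}.

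Two corrections are needed.

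First, the identity requires $|\lambda|=mn$, which the statement leaves tacit. Your intermediate claim ``$\langle h_n[h_m],h_\alpha\rangle=p_n^m(\alpha)$ for any $\alpha$'' fails without it. If $|\alpha|=mk$ with $k<n$, the left side is $0$ by degree. But $p_n^m(\alpha)$ counts vector partitions of $\alpha$ into $k$ parts of size $m$ and can be positive: for $\lambda=(m)$ and $n=2$ the formula gives $p_2^m(m,0)=1\neq 0$. Since $|\lambda+\omega(\sigma)|=|\lambda|$, assuming $\lambda\vdash mn$ fixes everything.

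Second, drop the ``observation'' that $h_n[h_m]$ involves only $m_\mu$ with $\ell(\mu)\le n$. It is false: $h_1[h_2]=h_2=m_{(2)}+m_{(1,1)}$. A statement about monomial support would not give Schur vanishing anyway, e.g.\ $m_{(2)}=s_{(2)}-s_{(1,1)}$. The ``combinatorial'' bullet is not an argument either; keep only the Kostka-number route.
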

\begin{proof}
By the Jacobi--Trudi identity,
$$\langle h_n[h_m],s_\lambda\rangle=\sum_{\sigma\in S_n}\sgn(\sigma)\langle h_n[h_m],h_{\lambda+\omega(\sigma)}\rangle,$$
and $\langle h_n[h_m],h_{\lambda+\omega(\sigma)}\rangle$ is the coefficient of
$m_{\lambda+\omega(\sigma)}$ in the monomial expansion of $h_n[h_m]$. The
theorem follows from Lemma~\ref{monomial-complete}.
\end{proof}

\begin{lemma}\label{stability-of-vecpar}
Let $\mu=(\mu_1,\mu_2,\dots,\mu_l)$ be a vector (not necessarily a partition). Then
$$p_n^m(mn-|\mu|,\mu)=p_n^{\leq m}(\mu).$$
\end{lemma}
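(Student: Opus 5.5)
We will prove the identity by an explicit bijection between the two sets of vector partitions, obtained by deleting (respectively restoring) the first coordinate of each part.

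\textbf{Forward map.} Take a vector partition of $(mn-|\mu|,\mu)$ counted by $p_n^m$. It has at most $n$ parts, each a vector $(a,\beta)\in\Z_{\ge0}\times\Z_{\ge0}^l$ with $a+|\beta|=m$. Send each part $(a,\beta)$ to its tail $\beta$. Then $|\beta|=m-a\le m$, and the tails sum to $\mu$. Tails equal to $0$ (parts of the form $(m,0,\dots,0)$) are discarded. The result is a vector partition of $\mu$ into at most $n$ non-zero parts, each of size at most $m$, so it is counted by $p_n^{\le m}(\mu)$.

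\textbf{Inverse map.} Take a vector partition $\{\beta^1,\dots,\beta^k\}$ of $\mu$ with $k\le n$ and $|\beta^j|\le m$.
\begin{enumerate}
\item Lift each $\beta^j$ to the part $(m-|\beta^j|,\beta^j)$, which has size $m$ and non-negative first entry.
\item Adjoin $n-k$ copies of $(m,0,\dots,0)$, so that there are exactly $n$ parts.
\item The first coordinates then sum to $km-|\mu|+(n-k)m=mn-|\mu|$, so we obtain a partition of $(mn-|\mu|,\mu)$ into parts of size $m$.
\end{enumerate}

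\textbf{Matching the part counts.} This is the one point needing care. It is resolved once we note that $p_n^m$ must count partitions with \emph{exactly} $n$ parts. Indeed, all parts have size $m$ and the total size is $mn$, so there are exactly $n$ parts. (This is consistent with the use in Lemma~\ref{monomial-complete}.) Given that, the two maps are mutually inverse on multisets:
\begin{itemize}
\item a part of size $m$ is determined by its tail;
\item the number of padding parts $(m,0,\dots,0)$ is forced to be $n$ minus the number of non-zero tails.
\end{itemize}

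\textbf{Degenerate cases.} If $mn-|\mu|<0$, both sides are zero. The left side is zero by the convention on negative entries. The right side is zero because any vector partition of $\mu$ into at most $n$ parts of size at most $m$ has $|\mu|\le mn$. If $\mu$ has a negative entry, both sides vanish by convention.
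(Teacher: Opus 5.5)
Your proposal is correct and follows the paper's proof. Both arguments delete the first coordinate of each size-$m$ part, and invert this by lifting $\beta\mapsto(m-|\beta|,\beta)$ and padding with copies of $(m,0,\dots,0)$; the paper phrases the padding as allowing some $\beta^j$ to be zero. Your observations that there are exactly $n$ parts, and your treatment of the degenerate cases, make explicit details the paper leaves implicit.
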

\begin{proof}
Suppose $(mn-|\mu|,\mu_1,\mu_2,\dots,\mu_l)=\sum_{j=1}^n\alpha^j$ with
$|\alpha^j|=m$ for all $j$, and let $\alpha_k^j$ denote the $k$-th component of
$\alpha^j$. Removing the first component $\alpha_1^j$ of each $\alpha^j$ gives a
vector partition of $\mu$, and since $|\alpha^j|=m$, each resulting vector has
size at most $m$. Conversely, if $\mu=\sum_{j=1}^n\beta^{j}$ (some of the
$\beta^j$ may be zero) with $|\beta^j|\leq m$, then setting
$\gamma^j=(m-|\beta^j|,\beta^j)$ gives a vector partition
$(mn-|\mu|,\mu)=\sum_{j=1}^n\gamma^j$ into $n$ parts of size $m$. These
constructions are mutually inverse.
\end{proof}

\begin{example}
Let $m=n=3$ and $\lambda=(2,2)$. A vector partition of $(5,2,2)$ into at most
$3$ parts, each of size $3$, is
$$
\vcc{5}{2}{2} = \vcc{3}{0}{0} + \vcc{1}{1}{1} + \vcc{1}{1}{1}.
$$
Removing the first component of each vector (and discarding the resulting zero
vector) gives a vector partition of $(2,2)$ into at most $3$ parts, each of
size at most $3$:
$$
\vc{2}{2} = \vc{1}{1} + \vc{1}{1}.
$$
This process can be reversed to recover the vector partition of $(5,2,2)$.
\end{example}

\begin{lemma}\label{no-role-of-m-and-n}
Let $\alpha$ be a vector. For all $m,n\geq |\alpha|$, we have $p_n^{\leq m}(\alpha)=p(\alpha)$.
\end{lemma}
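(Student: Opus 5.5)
The plan is to show that, once $m,n\ge|\alpha|$, the two constraints defining $p_n^{\le m}(\alpha)$ are automatically satisfied by every vector partition of $\alpha$. Then $p_n^{\le m}(\alpha)$ counts all vector partitions of $\alpha$, which is $p(\alpha)$ by definition.

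First, the case where $\alpha$ has a negative component is immediate: both sides are zero by the convention stated above. So assume $\alpha$ has non-negative integer entries, and fix a vector partition $\alpha=\beta^1+\dots+\beta^r$ into non-zero vectors $\beta^j$ with non-negative integer entries.

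\emph{Number of parts.} Each $\beta^j$ is non-zero with non-negative integer entries, so $|\beta^j|\ge1$. Hence
$$r\le\sum_{j=1}^r|\beta^j|=|\alpha|\le n.$$

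\emph{Size of each part.} All entries are non-negative, so $|\beta^j|\le\sum_i|\beta^i|=|\alpha|\le m$ for every $j$.

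Thus every vector partition of $\alpha$ has at most $n$ parts, all of size at most $m$. Conversely, $p_n^{\le m}(\alpha)$ counts a subset of all vector partitions by definition. The two sets therefore coincide, giving $p_n^{\le m}(\alpha)=p(\alpha)$.

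There is no real obstacle here. The only point needing care is that a ``part'' means a non-zero vector, which is what makes the bound on the number of parts work; the zero vectors allowed in the proof of Lemma~\ref{stability-of-vecpar} are discarded when counting parts. The degenerate case $\alpha=0$ is also consistent: the empty partition is the unique vector partition on both sides.
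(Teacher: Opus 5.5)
Your proof is correct and follows the paper's argument: every vector partition of $\alpha$ has at most $|\alpha|$ parts (each part has size at least $1$), and every part has size at most $|\alpha|$, so both constraints in $p_n^{\le m}$ are vacuous once $m,n\ge|\alpha|$. Your explicit treatment of vectors with a negative component, via the zero convention, is a harmless addition that the paper leaves implicit.
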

\begin{proof}
A vector partition of $\alpha$ has at most $|\alpha|$ parts, and each of its
parts has size at most $|\alpha|$.
\end{proof}

The main result of this section is the following proposition, which
expresses a stability property of vector-partition functions.

\begin{proposition}\label{stability}
Let $\alpha=(\alpha_1,\alpha_2,\dots,\alpha_n)$ be a vector with $|\alpha|=mn$,
and let $d>m(n-1)-\alpha_1-1$. Then
$$p_n^{m+d}(\alpha_1+nd,\alpha_2,\dots,\alpha_n)=p_n^{m+d+1}(\alpha_1+n(d+1),\alpha_2,\dots,\alpha_n).$$
\end{proposition}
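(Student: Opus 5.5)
The plan is to reduce both sides, via Lemma~\ref{stability-of-vecpar}, to vector-partition counts of the tail vector $\alpha'=(\alpha_2,\dots,\alpha_n)$, and then to show that the hypothesis on $d$ makes the part-size bound irrelevant, as in Lemma~\ref{no-role-of-m-and-n}.

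First, since $|\alpha|=mn$, we have $|\alpha'|=mn-\alpha_1$. Therefore
\[
\alpha_1+nd=n(m+d)-|\alpha'|
\quad\text{and}\quad
\alpha_1+n(d+1)=n(m+d+1)-|\alpha'|.
\]
So the two vectors in the statement are exactly $(n(m+d)-|\alpha'|,\alpha')$ and $(n(m+d+1)-|\alpha'|,\alpha')$. Lemma~\ref{stability-of-vecpar}, applied with $\mu=\alpha'$ and with $m$ replaced by $m+d$ and by $m+d+1$, then gives
\[
\text{LHS}=p_n^{\le m+d}(\alpha')
\quad\text{and}\quad
\text{RHS}=p_n^{\le m+d+1}(\alpha').
\]
This lemma is stated for an arbitrary vector $\mu$. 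If $\alpha'$ has a negative entry, both sides vanish by convention, so we may assume $\alpha'$ is non-negative.

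Next, the hypothesis $d>m(n-1)-\alpha_1-1$ is equivalent, for integers, to $m+d\ge mn-\alpha_1=|\alpha'|$. Every part of a vector partition of $\alpha'$ has size at most $|\alpha'|$, hence at most $m+d$. So the size constraints $\le m+d$ and $\le m+d+1$ both impose no restriction. It follows that both quantities equal $p_n(\alpha')$, the number of vector partitions of $\alpha'$ into at most $n$ parts, and the identity follows.

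The only point needing care is the bookkeeping that matches the first coordinate to the form $n(m+d)-|\alpha'|$ required by Lemma~\ref{stability-of-vecpar}, together with translating the strict inequality on $d$ into $m+d\ge|\alpha'|$. Neither step is a genuine obstacle.
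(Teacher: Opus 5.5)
Your proof is correct, but it is organized differently from the paper's. The paper proves the proposition by a direct bijection. It adds $1$ to the first coordinate of every part. For the inverse, it uses the hypothesis on $d$ to show that no part of a partition of $(\alpha_1+n(d+1),\alpha_2,\dots,\alpha_n)$ into parts of size $m+d+1$ can have first coordinate $0$, since such a part would have size at most $\alpha_2+\dots+\alpha_n=mn-\alpha_1$.

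You instead strip the first coordinate on both sides with Lemma~\ref{stability-of-vecpar} and note that $m+d\ge|\alpha'|$ makes the size bound vacuous. The two uses of the hypothesis are the same fact: a part with first coordinate $0$ corresponds to a part of $\alpha'$ of size $m+d+1$, which your bound rules out. Composing your two bijections gives exactly the paper's map.

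The paper's version is self-contained and needs only the one-step comparison. Yours reuses the two-step reduction from the proof of Theorem~\ref{proof1}, and as a bonus it identifies the common value as $p_n(\alpha_2,\dots,\alpha_n)$, independent of $d$. Fed into the proof of Theorem~\ref{proof2}, this gives Brion's stable value explicitly as $\sum_{\sigma\in S_n}\sgn(\sigma)\,p_n(\lambda_2+\sigma(2)-2,\dots,\lambda_n+\sigma(n)-n)$ for $d>mn-m-\lambda_1-1$. The paper does not state this formula.
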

\begin{proof}
Given a vector partition of $(\alpha_1+nd,\alpha_2,\dots,\alpha_n)$ into $n$
parts of size $m+d$, adding $1$ to the first component of each part gives a
vector partition of $(\alpha_1+n(d+1),\alpha_2,\dots,\alpha_n)$ into $n$ parts
of size $m+d+1$. Conversely, let
$(\alpha_1+n(d+1),\alpha_2,\dots,\alpha_n)=\sum_{j=1}^n\gamma^j$ with
$|\gamma^j|=m+d+1$ for all $j$. We claim that $\gamma_1^j>0$ for all $j$.
Otherwise $\gamma_1^j=0$ for some $j$, and then
$m+d+1=|\gamma^j|\leq \alpha_2+\alpha_3+\dots+\alpha_n=mn-\alpha_1$, that is,
$d\le m(n-1)-\alpha_1-1$, a contradiction. Subtracting $1$ from the first
component of each part therefore gives a vector partition of
$(\alpha_1+nd,\alpha_2,\dots,\alpha_n)$ into $n$ parts of size $m+d$. The two
constructions are mutually inverse.
\end{proof}

\section{Stability of plethysm coefficients}\label{sec-stability}
We show that for a partition $\lambda$ and $m,n$ large enough, the plethysm
coefficient $\langle h_n[h_m],s_{\lambda[mn]}\rangle$ is independent of $m$ and
$n$; in particular,
$\langle h_n[h_m],s_{\lambda[mn]}\rangle=\langle h_m[h_n],s_{\lambda[mn]}\rangle$.
This gives a short proof of the main stability theorem of \cite{bow-pag}.

\begin{theorem}[Bowman--Paget stability]\label{proof1}
The double sequence $\{\langle h_n[h_m],s_{\lambda[mn]}\rangle\}_{m\geq1,n\geq1}$
is constant for $m,n\geq |\lambda|$.
\end{theorem}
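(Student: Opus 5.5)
Apply Theorem~\ref{schur-exp-plethysm} to $\nu=\lambda[mn]$, which has length $\ell(\lambda)+1$, and reduce everything to the vector-partition function $p$ via Lemmas~\ref{stability-of-vecpar} and~\ref{no-role-of-m-and-n}. Set $l=\ell(\lambda)$ and assume $m,n\ge|\lambda|$. Then $n\ge l+1$ unless $\lambda=\emptyset$; that case is trivial, since the coefficient of $s_{(mn)}$ in $h_n[h_m]$ is $1$.

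\textbf{Expanding.} Regard $\nu=(mn-|\lambda|,\lambda_1,\dots,\lambda_l,0,\dots,0)$ as a vector of length $n$. Theorem~\ref{schur-exp-plethysm} gives
$$\langle h_n[h_m],s_\nu\rangle=\sum_{\sigma\in S_n}\sgn(\sigma)\,p_n^m(\nu+\omega(\sigma)).$$
First I would cut this sum down to $S_{l+1}$. It is cleaner to use the Jacobi--Trudi determinant of size $l+1$ directly, since $\nu$ has only $l+1$ parts. Running the proof of Theorem~\ref{schur-exp-plethysm} with an $(l+1)\times(l+1)$ determinant, the term for $\sigma\in S_{l+1}$ is the coefficient of $m_\beta$ in $h_n[h_m]$, where $\beta=\nu+\omega(\sigma)$ is a vector of length $l+1$. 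That coefficient is $p_n^m(\beta)$: by symmetry it is the coefficient of the sorted monomial, and the count of Lemma~\ref{monomial-complete} is invariant under permuting coordinates. Negative entries give $0$, matching $h_{k}=0$ for $k<0$.

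\textbf{Stripping the first coordinate.} The entries of $\beta$ sum to $mn$, and its first coordinate is $mn-|\mu^\sigma|$, where $\mu^\sigma=(\lambda_i+\sigma(i+1)-(i+1))_{1\le i\le l}$. Lemma~\ref{stability-of-vecpar} then gives $p_n^m(\beta)=p_n^{\le m}(\mu^\sigma)$. One needs $mn-|\mu^\sigma|\ge0$ to be harmless; this holds because $|\mu^\sigma|\le|\lambda|+l\le mn$ in our range.

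\textbf{Removing the dependence on $m,n$.} This is the main obstacle. Lemma~\ref{no-role-of-m-and-n} requires $m,n\ge|\mu^\sigma|$, but $|\mu^\sigma|=|\lambda|+\sum_{i\ge2}(\sigma(i)-i)=|\lambda|+1-\sigma(1)$, which can equal $|\lambda|$ (when $\sigma(1)=1$) and never exceeds it. So whenever $\mu^\sigma$ has no negative entry, $|\mu^\sigma|\le|\lambda|\le m,n$, and hence $p_n^{\le m}(\mu^\sigma)=p(\mu^\sigma)$. When $\mu^\sigma$ does have a negative entry, both sides vanish by convention. This yields
$$\langle h_n[h_m],s_{\lambda[mn]}\rangle=\sum_{\sigma\in S_{l+1}}\sgn(\sigma)\,p\big(\lambda_1+\sigma(2)-2,\dots,\lambda_l+\sigma(l+1)-(l+1)\big),$$
which is independent of $m$ and $n$, proving the theorem. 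This is also exactly the left-hand side of Theorem~\ref{thmB}, as announced.

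The delicate bookkeeping is in two places. One is justifying the reduction from $S_n$ to $S_{l+1}$ (or, equivalently, using the shorter determinant). The other is the requirement $\lambda[mn]$ be a partition, i.e.\ $mn-|\lambda|\ge\lambda_1$, which holds since $mn\ge|\lambda|^2\ge|\lambda|+\lambda_1$ when $|\lambda|\ge2$; the case $|\lambda|=1$ is checked directly.
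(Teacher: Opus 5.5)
Your proof is correct and follows the paper's argument exactly: you expand with the $(\ell+1)\times(\ell+1)$ Jacobi--Trudi determinant, strip the first coordinate via Lemma~\ref{stability-of-vecpar}, and pass to $p$ via Lemma~\ref{no-role-of-m-and-n} using $|\mu^\sigma|=|\lambda|+1-\sigma(1)\le|\lambda|$. One small slip, which is harmless: your claim $n\ge\ell+1$ fails for $\lambda=(1^\ell)$ with $n=\ell$, but nothing depends on it, because the multiset count behind the size-$(\ell+1)$ determinant is valid for every $n$.
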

\begin{proof}
Write $\ell=\ell(\lambda)$. By Theorem~\ref{schur-exp-plethysm},
\begin{align*}
\langle h_n[h_m],s_{\lambda[mn]}\rangle
&=\sum_{\sigma\in S_{\ell+1}}\sgn(\sigma)\, p_{n}^{m}\big(mn-|\lambda|+\sigma(1)-1,\lambda_1+\sigma(2)-2,\dots,\lambda_{\ell}+\sigma(\ell+1)-(\ell+1)\big)\\
&=\sum_{\sigma\in S_{\ell+1}}\sgn(\sigma)\, p_{n}^{\leq m}\big(\lambda_1+\sigma(2)-2,\dots,\lambda_{\ell}+\sigma(\ell+1)-(\ell+1)\big)\\
&=\sum_{\sigma\in S_{\ell+1}}\sgn(\sigma)\, p\big(\lambda_1+\sigma(2)-2,\dots,\lambda_{\ell}+\sigma(\ell+1)-(\ell+1)\big),
\end{align*}
which is independent of $m$ and $n$. The second equality follows from
Lemma~\ref{stability-of-vecpar}, since the entries of
$\lambda[mn]+\omega(\sigma)$ sum to $mn$ (where $\omega(\sigma)(i)=\sigma(i)-i$); the third follows from
Lemma~\ref{no-role-of-m-and-n}, since the vectors involved have size at most
$|\lambda|\le m,n$.
\end{proof}

\begin{notation}
Let $\lambda+(k)$ denote the partition $(\lambda_1+k,\lambda_2,\lambda_3,\dots)$.
\end{notation}

Next, we give a short proof of a stability result of Brion \cite{brion}: the
limit $\lim_{d\to\infty}\langle h_n[h_{m+d}],s_{\lambda+(nd)}\rangle$ exists.
It is enough to show that
$\langle h_n[h_{m+d}],s_{\lambda+(nd)}\rangle=\langle h_n[h_{m+d+1}],s_{\lambda+(n(d+1))}\rangle$
for $d$ large enough. Our argument is in the same spirit as Colmenarejo's
combinatorial proof \cite{colmenarejo-stability}, which also passes through the
monomial expansion of $h_n[h_m]$; working directly with vector partitions makes
the bijection particularly short.

\begin{theorem}[Brion's stability]\label{proof2}
Let $\lambda=(\lambda_1\ge\lambda_2\ge\dots\ge\lambda_n)\vdash mn$. If
$d>mn-m-\lambda_1-1$, then
$$\langle h_n[h_{m+d}],s_{\lambda+(nd)}\rangle=\langle h_n[h_{m+d+1}],s_{\lambda+(n(d+1))}\rangle.$$
\end{theorem}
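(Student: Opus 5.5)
We expand both sides with Theorem~\ref{schur-exp-plethysm} and compare them term by term using Proposition~\ref{stability}. Since $\ell(\lambda)\le n$ and adding to the first part does not change the length, both $\lambda+(nd)$ and $\lambda+(n(d+1))$ have length at most $n$. Theorem~\ref{schur-exp-plethysm} then gives
\begin{align*}
\langle h_n[h_{m+d}],s_{\lambda+(nd)}\rangle&=\sum_{\sigma\in S_n}\sgn(\sigma)\,p_n^{m+d}\big(\lambda+(nd)+\omega(\sigma)\big),\\
\langle h_n[h_{m+d+1}],s_{\lambda+(n(d+1))}\rangle&=\sum_{\sigma\in S_n}\sgn(\sigma)\,p_n^{m+d+1}\big(\lambda+(n(d+1))+\omega(\sigma)\big).
\end{align*}
Here $\lambda$ is padded by zeros to length $n$. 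It suffices to show that the summands agree for each fixed $\sigma\in S_n$.

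Fix $\sigma$ and set $\alpha=\lambda+\omega(\sigma)$, so that $\alpha_i=\lambda_i+\sigma(i)-i$. Then $|\alpha|=|\lambda|=mn$, and the two arguments are $(\alpha_1+nd,\alpha_2,\dots,\alpha_n)$ and $(\alpha_1+n(d+1),\alpha_2,\dots,\alpha_n)$. These are exactly the two vectors compared in Proposition~\ref{stability}. That proposition applies once $d>m(n-1)-\alpha_1-1$, where $\alpha_1=\lambda_1+\sigma(1)-1\ge\lambda_1$. Hence the hypothesis $d>mn-m-\lambda_1-1$ implies $d>m(n-1)-\alpha_1-1$ for every $\sigma$, and the summands agree termwise.

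Some care is needed with vectors having negative entries, where $p(\cdot)$ is defined to be zero. Proposition~\ref{stability} is stated for vectors, and its bijection works as long as $\alpha_2,\dots,\alpha_n\ge0$. If some $\alpha_i$ with $i\ge2$ is negative, both sides vanish by convention, because the same entry stays negative. The first entry is $\alpha_1+nd$ with $\alpha_1\ge\lambda_1\ge0$, so it is never negative. In the proof of the proposition, the inequality $|\gamma^j|\le\alpha_2+\dots+\alpha_n$ uses nonnegativity only of the parts, which holds automatically. Summing the equal summands with signs $\sgn(\sigma)$ gives the theorem.

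The only step needing attention is the uniform bound over $\sigma$. It reduces to the observation $\sigma(1)\ge1$, so the weakest case is $\sigma(1)=1$, and this case determines the threshold stated in the theorem.
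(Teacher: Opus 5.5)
Your proof is correct and follows the paper's argument: you expand both sides by Theorem~\ref{schur-exp-plethysm} and match the summands for each $\sigma$ using Proposition~\ref{stability} with $\alpha=\lambda+\omega(\sigma)$, and the bound $\alpha_1=\lambda_1+\sigma(1)-1\ge\lambda_1$ makes the threshold on $d$ uniform in $\sigma$. Your remark on vectors with a negative entry $\alpha_i$, $i\ge2$ (both sides then vanish by convention), fills in a detail the paper leaves implicit.
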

\begin{proof}
By Theorem~\ref{schur-exp-plethysm},
\begin{align*}
&\langle h_n[h_{m+d+1}],s_{\lambda+(n(d+1))}\rangle\\
&\quad=\sum_{\sigma\in S_n}\sgn(\sigma)\, p_{n}^{m+d+1}\big(\lambda_1+n(d+1)+\sigma(1)-1,\lambda_2+\sigma(2)-2,\dots,\lambda_n+\sigma(n)-n\big)\\
&\quad=\sum_{\sigma\in S_n}\sgn(\sigma)\, p_{n}^{m+d}\big(\lambda_1+nd+\sigma(1)-1,\lambda_2+\sigma(2)-2,\dots,\lambda_n+\sigma(n)-n\big)\\
&\quad=\langle h_n[h_{m+d}],s_{\lambda+(nd)}\rangle,
\end{align*}
where the second equality follows from Proposition~\ref{stability} applied to
$\alpha=\lambda+\omega(\sigma)$ (where $\omega(\sigma)(i)=\sigma(i)-i$), whose first component is
$\lambda_1+\sigma(1)-1\ge\lambda_1$.
\end{proof}

\section{Stable plethysm coefficients and the plethysm with \texorpdfstring{$H-h_1$}{H-h1}}\label{stable-plethysm}
In this section, we relate the stable plethysm coefficients to plethysms with
$H-h_1$, in analogy with a formula for restriction coefficients.

Let $f$ be a function on $\Z^n$. For $1\le i\le n$ and $k\in\Z$, define the
operator $E_i^k$ by
\begin{equation}\label{def-of-E-i-k}
(E_i^k\bullet f)(a_1,\dots,a_i,\dots,a_n)=f(a_1,\dots,a_i+k,\dots,a_n),
\end{equation}
and write $E_i=E_i^1$. The operators $E_1,\dots,E_n$ commute. By Littlewood's
plethystic formula (see Section~\ref{section-on-polynomial-rep}), the
restriction coefficients are given by
$r_\lambda^\mu=\langle s_\lambda,s_\mu[H]\rangle$, and by
\cite[Corollary~4.2]{narayanan},
\begin{equation*}
    r_\lambda^{(n)}=\sum_{\sigma\in S_n}\sgn(\sigma)\,p_n(\lambda+\omega(\sigma)),
\end{equation*}
where $\omega(\sigma)(i)=\sigma(i)-i$. In terms of the operators $E_i$, this
formula reads
\begin{equation*}
    r_\lambda^{(n)}=\sum_{\sigma\in S_n}\sgn(\sigma)\bigg(\Big(\prod_{i=1}^nE_i^{\sigma(i)-i}\Big)\bullet p_n\bigg)(\lambda)=\Big(\det\big[E_i^{j-i}\big]\bullet p_n\Big)(\lambda),
\end{equation*}
where $[E_i^{j-i}]$ denotes the $n\times n$ matrix whose $(i,j)$-th entry is $E_i^{j-i}$.

\begin{example}
We have $r_\lambda^{(2)}=p_2(\lambda_1,\lambda_2)-p_2(\lambda_1+1,\lambda_2-1)$,
which can also be written as
\begin{equation*}
    \big((1-E_1E_2^{-1})\bullet p_2\big)(\lambda)=\Bigg(\det\begin{pmatrix}
        1&E_1\\E_2^{-1}&1
    \end{pmatrix}\bullet p_2\Bigg) (\lambda).
\end{equation*}
\end{example}

\begin{notation}\label{notation-p-geq-2}
Let $p^{\geq 2}(\alpha)$ denote the number of vector partitions of $\alpha$
all of whose parts have size at least $2$, and let $p_n^{\geq 2}(\alpha)$
denote the number of such vector partitions with at most $n$ parts.
\end{notation}

\begin{lemma}\label{partition-without-e-i}
For every vector $\lambda=(\lambda_1,\dots,\lambda_n)$,
$$p^{\geq 2}(\lambda)=\bigg(\Big(\prod_{i=1}^n(1-E_i^{-1})\Big)\bullet p\bigg)(\lambda).$$
\end{lemma}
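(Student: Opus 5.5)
The most transparent route is through generating functions. Write $x^\alpha=x_1^{\alpha_1}\cdots x_n^{\alpha_n}$. The vector-partition function is the coefficient extractor of an infinite product:
\[
\sum_{\alpha\in\Z_{\ge0}^n}p(\alpha)\,x^\alpha=\prod_{\beta\in\Z_{\ge0}^n\setminus\{0\}}\frac{1}{1-x^\beta}.
\]
Restricting to parts of size at least $2$ removes exactly the $n$ factors indexed by the unit vectors $e_1,\dots,e_n$. It also helps to extend $p$ and $p^{\ge2}$ by zero to all of $\Z^n$, as the paper's convention already does. Then
\[
\sum_{\alpha}p^{\ge2}(\alpha)\,x^\alpha=\prod_{i=1}^n(1-x_i)\cdot\sum_{\alpha}p(\alpha)\,x^\alpha.
\]

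The next step is to translate multiplication by $(1-x_i)$ into the operator $1-E_i^{-1}$. Let $F=\sum_\alpha f(\alpha)x^\alpha$. The coefficient of $x^\lambda$ in $x_iF$ is $f(\lambda-e_i)$, which is $(E_i^{-1}\bullet f)(\lambda)$ by \eqref{def-of-E-i-k}. Hence the coefficient of $x^\lambda$ in $(1-x_i)F$ is $((1-E_i^{-1})\bullet f)(\lambda)$. Since the $E_i$ commute, iterating over $i=1,\dots,n$ gives
\[
p^{\ge2}(\lambda)=\Big(\big(\textstyle\prod_{i=1}^n(1-E_i^{-1})\big)\bullet p\Big)(\lambda)
\]
for every $\lambda\in\Z^n$. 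For vectors with a negative entry both sides vanish, so the identity holds there too.

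The only point needing care is that all these manipulations are legitimate as formal power series. Every series involved is supported on $\Z_{\ge0}^n$, and the infinite product converges formally, since only finitely many $\beta$ have $|\beta|\le N$. The identification of shifts with multiplication by monomials must also respect the zero-extension convention. This holds because $x^{\lambda-e_i}$ with a negative exponent simply contributes no term.

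If a bijective argument is preferred instead, the plan would be inclusion–exclusion. For a subset $S\subseteq\{1,\dots,n\}$, the number of vector partitions of $\lambda$ containing at least one copy of $e_i$ for each $i\in S$ equals $p(\lambda-\sum_{i\in S}e_i)$: remove one copy of each such $e_i$. Alternating over $S$ then counts the partitions with no part $e_i$, that is, with no part of size $1$. This is exactly the expansion of $\prod_i(1-E_i^{-1})$. I expect no real obstacle here; the step requiring the most attention is the clean bookkeeping of the zero-extension at the boundary.
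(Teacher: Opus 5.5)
Your proof is correct, and your fallback argument is exactly the paper's proof. The paper does inclusion--exclusion over subsets $A\subseteq[n]$, using that deleting one copy of each $e_i$ ($i\in A$) is a bijection onto the vector partitions of $\lambda-\sum_{i\in A}e_i$.

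Your main route is the generating-function form of the same fact. You delete the $n$ factors $(1-x_i)^{-1}$ from $\prod_{\beta\neq0}(1-x^\beta)^{-1}$ and read off coefficients via the dictionary $x_i\leftrightarrow E_i^{-1}$. Expanding $\prod_i(1-x_i)$ reproduces the paper's signed sum over subsets, so the two arguments have the same content.

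The paper's version is more elementary and avoids formal power series. Yours makes the operator calculus explicit, which fits the paper's next step, where the commuting $E_i$ are substituted into a Vandermonde identity (Lemma~\ref{determinantal-identity}). Under $E_i\mapsto x_i^{-1}$, Theorem~\ref{equality-of-vector-partitions} becomes a Laurent-polynomial identity multiplied against the generating function of $p$.

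Your observation that both sides vanish when $\lambda$ has a negative entry, because $E_i^{-1}$ only lowers entries, is correct. The paper leaves this point implicit.
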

\begin{proof} We write $[n]$ for the set $\{1,2,\dots,n\}$.
Let $e_i$ denote the $i$-th coordinate vector. A vector partition of $\lambda$
has all parts of size at least $2$ exactly when none of its parts is a
coordinate vector $e_i$. For $A\subseteq[n]$, removing one copy of $e_i$ for
each $i\in A$ is a bijection from the vector partitions of $\lambda$ having
every $e_i$ ($i\in A$) as a part to the vector partitions of
$\lambda-\sum_{i\in A}e_i$. Hence, by inclusion--exclusion,
\begin{align*}
    p^{\geq 2}(\lambda)=\sum_{A\subseteq[n]}(-1)^{|A|}\,p\Big(\lambda-\sum_{i\in A}e_i\Big)
    =\bigg(\Big(\prod_{i=1}^n(1-E_i^{-1})\Big)\bullet p\bigg)(\lambda).
\end{align*}
\end{proof}

The next lemma is crucial for what follows.

\begin{lemma}\label{determinantal-identity}
We have the equality of operators
\begin{equation*}
    \prod_{i=1}^n(1-E_i^{-1})\cdot\Big(\sum_{\sigma\in S_n}\sgn(\sigma)\prod_{i=1}^nE_i^{\sigma(i)-i}\Big)=\sum_{\sigma\in S_{n+1}}\sgn(\sigma)\prod_{i=1}^nE_i^{\sigma(i+1)-(i+1)}.
\end{equation*}
\end{lemma}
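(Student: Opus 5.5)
Both sides are polynomials in the commuting Laurent variables $E_1^{\pm1},\dots,E_n^{\pm1}$, so we may treat the identity as one of Laurent polynomials. We will write both sides as determinants and compare them by a single column operation.

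The left-hand factor is $\det[E_i^{j-i}]_{1\le i,j\le n}$. Writing $x_i=E_i$ and factoring $x_i^{-i}$ out of row $i$ gives
\[
\det[E_i^{j-i}]=\Big(\prod_{i=1}^n x_i^{-i}\Big)\det[x_i^{j}]_{i,j\le n}
=\Big(\prod_{i}x_i^{1-i}\Big)\prod_{i<j}(x_j-x_i),
\]
a Vandermonde determinant.

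For the right-hand side, note that the $n$ rows are indexed by $i+1\in\{2,\dots,n+1\}$. We therefore introduce an auxiliary row $0$ with variable $x_0=1$. Then
\[
\sum_{\sigma\in S_{n+1}}\sgn(\sigma)\prod_{i=1}^nE_i^{\sigma(i+1)-(i+1)}
=\sum_{\sigma}\sgn(\sigma)\,x_0^{\sigma(1)-1}\prod_{i=1}^n x_i^{\sigma(i+1)-(i+1)} .
\]
This is the $(n+1)\times(n+1)$ determinant $\det[y_r^{c-r}]$ with $y_1=1$ and $y_{r}=x_{r-1}$ for $r\ge2$. By the same computation it equals
\[
\Big(\prod_{r}y_r^{-r}\Big)\det[y_r^{c}]
=\Big(\prod_{i=1}^n x_i^{-(i+1)}\Big)\Big(\prod_{r}y_r\Big)\prod_{r<s}(y_s-y_r).
\]
Here $\prod_r y_r=\prod_i x_i$, and the Vandermonde product over pairs with $r=1$ contributes $\prod_{i=1}^n(x_i-1)$. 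The remaining pairs contribute $\prod_{i<j}(x_j-x_i)$. So the right-hand side equals
\[
\Big(\prod_i x_i^{-i}\Big)\prod_i(x_i-1)\prod_{i<j}(x_j-x_i).
\]

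Comparing the two expressions, the ratio of the right-hand side to $\det[E_i^{j-i}]$ is
\[
\prod_i x_i^{-1}(x_i-1)=\prod_i(1-E_i^{-1}),
\]
which is exactly the claimed identity.

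The main obstacle is bookkeeping of exponents and signs: the exponent in the extra row $1$ is $\sigma(1)-1$, and it disappears because $x_0=1$. One must check that no sign is lost when passing from $\det[y_r^{c-r}]$ to $\det[y_r^{c}]$. We will confirm this on the case $n=1$. There the left-hand side is $(1-E_1^{-1})\cdot 1$, and the right-hand side is $E_1^{0}-E_1^{-1}$ (from the identity and the transposition of $S_2$), so the two agree.
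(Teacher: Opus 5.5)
Your proof is correct and follows essentially the same route as the paper. Both arguments recognize the two sides as Vandermonde determinants, with the right-hand side being the $(n+1)\times(n+1)$ Vandermonde in which the extra variable equals $1$, so that its factors involving that variable give exactly $\prod_{i}(E_i-1)$. The paper clears denominators by multiplying by $\prod_i E_i^{i+1}$, whereas you factor $x_i^{-i}$ out of the rows and work directly with Laurent polynomials; despite your opening sentence, no column operation is actually needed.
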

\begin{proof}
For variables $x_0,x_1,\dots,x_n$, let
$$V(x_0,\dots,x_n)=\det\big[x_i^{j}\big]_{0\le i,j\le n}=\prod_{0\le i<j\le n}(x_j-x_i)$$
denote the Vandermonde determinant, and define $V(x_1,\dots,x_n)$ similarly.
Since the operators $E_1,\dots,E_n$ commute, we may substitute them for the
variables in any polynomial identity. Multiplying both sides of the claimed
identity by $\prod_{i=1}^nE_i^{i+1}$, it is enough to show that
$$\prod_{i=1}^n(E_i-1)\cdot\Big(\sum_{\sigma\in S_n}\sgn(\sigma)\prod_{i=1}^nE_i^{\sigma(i)}\Big)=\sum_{\sigma\in S_{n+1}}\sgn(\sigma)\prod_{i=1}^nE_i^{\sigma(i+1)}.$$
Factoring $E_i$ out of the $i$-th row of the determinant, the sum on the left
equals
$$\det\big[E_i^j\big]_{1\le i,j\le n}=(E_1E_2\cdots E_n)\cdot V(E_1,\dots,E_n).$$
Setting $E_0:=1$, the right-hand side equals
\begin{align*}
\det\big[E_i^{j}\big]_{0\le i\le n,\ 1\le j\le n+1}
&=(E_1E_2\cdots E_n)\cdot V(1,E_1,\dots,E_n)\\
&=(E_1E_2\cdots E_n)\cdot\prod_{i=1}^n(E_i-1)\cdot V(E_1,\dots,E_n),
\end{align*}
since the factors of $V(1,E_1,\dots,E_n)$ involving $E_0=1$ are exactly
$E_i-1$ for $1\le i\le n$. This proves the identity.
\end{proof}

Applying this lemma, we obtain the following.

\begin{theorem}\label{equality-of-vector-partitions}
For every vector $\lambda=(\lambda_1,\lambda_2,\dots,\lambda_n)$,
\begin{align*}
&\sum_{\sigma\in S_{n+1}}\sgn(\sigma)\,p\big(\lambda_1+\sigma(2)-2,\lambda_2+\sigma(3)-3,\dots,\lambda_n+\sigma(n+1)-(n+1)\big)\\
&\qquad=\sum_{\sigma\in S_n}\sgn(\sigma)\,p^{\geq 2}\big(\lambda_1+\sigma(1)-1,\lambda_2+\sigma(2)-2,\dots,\lambda_n+\sigma(n)-n\big).
\end{align*}
\end{theorem}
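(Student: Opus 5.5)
The plan is to apply both sides of the operator identity in Lemma~\ref{determinantal-identity} to the function $p$ on $\Z^n$ and evaluate at $\lambda$. Here $p$ is extended by zero to vectors with a negative component, as in the paper's convention.

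First, the right-hand side of Lemma~\ref{determinantal-identity}, applied to $p$ and evaluated at $\lambda$, is by definition of $E_i^k$
$$\sum_{\sigma\in S_{n+1}}\sgn(\sigma)\,p\big(\lambda_1+\sigma(2)-2,\dots,\lambda_n+\sigma(n+1)-(n+1)\big),$$
which is the left-hand side of the theorem. Here $S_{n+1}$ permutes $\{1,\dots,n+1\}$ and the variable $E_0=1$ absorbs the index $\sigma(1)$. The shift $\prod_i E_i^{\sigma(i+1)-(i+1)}$ acts by adding $\sigma(i+1)-(i+1)$ to the $i$-th coordinate.

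Second, the left-hand side is the composite operator $\prod_{i}(1-E_i^{-1})\cdot D$, where $D=\sum_{\sigma\in S_n}\sgn(\sigma)\prod_i E_i^{\sigma(i)-i}$. Since all the $E_i$ commute, I would reorder it as $D\cdot\prod_i(1-E_i^{-1})$. Acting on $p$, this gives
$$\sum_{\sigma\in S_n}\sgn(\sigma)\Big(\big(\textstyle\prod_i(1-E_i^{-1})\big)\bullet p\Big)(\lambda+\omega(\sigma)).$$
By Lemma~\ref{partition-without-e-i}, applied at the vector $\lambda+\omega(\sigma)$, each summand equals $p^{\ge2}(\lambda+\omega(\sigma))$. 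This is the right-hand side of the theorem.

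The only point needing care is that Lemma~\ref{partition-without-e-i} must hold for every vector in $\Z^n$, including those with negative entries, where both sides should vanish. Suppose some coordinate is negative. Then every shifted vector $\lambda-\sum_{i\in A}e_i$ also has that coordinate negative, so every $p$-term is zero, and $p^{\ge2}$ is zero by convention. So the lemma extends to all of $\Z^n$.

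One should also check that the operator identity is an identity of Laurent polynomials in commuting shifts, so it may legitimately be applied to any function on $\Z^n$. This holds because $p$ vanishes off $\Z_{\ge0}^n$ and each side of the identity is a finite combination of shifts. I expect no real obstacle; the work is in Lemma~\ref{determinantal-identity}.
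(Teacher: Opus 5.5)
Your proposal is correct and follows the paper's proof exactly. You apply the operator identity of Lemma~\ref{determinantal-identity} to $p$, use commutativity of the $E_i$ to move $\prod_i(1-E_i^{-1})$ next to $p$, and invoke Lemma~\ref{partition-without-e-i} at each shifted vector $\lambda+\omega(\sigma)$; your extra check that Lemma~\ref{partition-without-e-i} holds trivially at vectors with a negative entry is a detail the paper leaves implicit, and you handle it correctly.
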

\begin{proof}
Since the operators $E_i$ commute, Lemma~\ref{determinantal-identity}
and Lemma~\ref{partition-without-e-i} give
\begin{align*}
&\sum_{\sigma\in S_{n+1}}\sgn(\sigma)\,p\big(\lambda_1+\sigma(2)-2,\dots,\lambda_n+\sigma(n+1)-(n+1)\big)\\
&\qquad=\bigg(\Big(\sum_{\sigma\in S_{n+1}}\sgn(\sigma)\prod_{i=1}^nE_i^{\sigma(i+1)-(i+1)}\Big)\bullet p\bigg)(\lambda)\\
&\qquad=\bigg(\Big(\sum_{\sigma\in S_n}\sgn(\sigma)\prod_{i=1}^nE_i^{\sigma(i)-i}\Big)\bullet\Big(\Big(\prod_{i=1}^n(1-E_i^{-1})\Big)\bullet p\Big)\bigg)(\lambda) &&\text{(Lemma~\ref{determinantal-identity})}\\
&\qquad=\bigg(\Big(\sum_{\sigma\in S_n}\sgn(\sigma)\prod_{i=1}^nE_i^{\sigma(i)-i}\Big)\bullet p^{\ge2}\bigg)(\lambda) &&\text{(Lemma~\ref{partition-without-e-i})}\\
&\qquad=\sum_{\sigma\in S_n}\sgn(\sigma)\,p^{\geq 2}\big(\lambda_1+\sigma(1)-1,\dots,\lambda_n+\sigma(n)-n\big).
\end{align*}
\end{proof}

The next lemma relates the plethysm coefficients $\langle h_n[H-h_1],s_\lambda\rangle$
to the vector-partition function $p^{\ge 2}(\cdot)$.

\begin{lemma}\label{greater-2-lemma}
For every partition $\lambda$,
$$\langle h_{n}[h_0+h_2+h_3+h_4+\cdots],s_\lambda\rangle=\sum_{\sigma\in S_{\ell(\lambda)}}\sgn(\sigma)\,p_{n}^{\geq 2}\big(\lambda_1+\sigma(1)-1,\lambda_2+\sigma(2)-2,\dots,\lambda_{\ell(\lambda)}+\sigma(\ell(\lambda))-\ell(\lambda)\big).$$
\end{lemma}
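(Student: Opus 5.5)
The plan is to mimic the proofs of Lemma~\ref{monomial-complete} and Theorem~\ref{schur-exp-plethysm}, with $h_m$ replaced by $F=h_0+h_2+h_3+\cdots$. First compute the monomial expansion of $h_n[F]$. The plethysm $h_n[F]$ is obtained by substituting the monomials of $F$, with multiplicity, as variables into $h_n$. The monomials of $F$ are $x^\gamma$ for every composition $\gamma$ with $|\gamma|=0$ or $|\gamma|\ge2$, each with coefficient $1$; the constant monomial $1$ corresponds to $\gamma=0$. So $h_n[F]$ is the sum over multisets $\{\gamma^1,\dots,\gamma^n\}$ of such compositions of $x^{\gamma^1+\cdots+\gamma^n}$.

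Next, discard the zero vectors among the $\gamma^j$. This gives a bijection between these multisets summing to $\beta$ and vector partitions of $\beta$ with at most $n$ parts, all of size at least $2$. Hence the coefficient of $x^\beta$ in $h_n[F]$ equals $p_n^{\ge2}(\beta)$. This function is symmetric in the entries of $\beta$, so
$$h_n[F]=\sum_{\mu}p_n^{\ge2}(\mu)\,m_\mu,$$
where the sum is over all partitions $\mu$ (of any size). The series is not homogeneous, so the pairing with $s_\lambda$ should be read degreewise: only the degree-$|\lambda|$ component matters, and it is a finite sum. Working in finitely many variables is unnecessary here, because the coefficient identification holds for every exponent vector.

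Then apply the Jacobi--Trudi identity with $\ell=\ell(\lambda)$ rows:
$$s_\lambda=\sum_{\sigma\in S_\ell}\sgn(\sigma)\,h_{\lambda+\omega(\sigma)}.$$
Here $h_\alpha$ is interpreted as $0$ if some entry is negative and as the product of the $h_{\alpha_i}$ otherwise, which is not a partition in general. By duality of the $h$ and $m$ bases, $\langle h_n[F],h_\alpha\rangle$ is the coefficient of $m_{\mathrm{sort}(\alpha)}$, which is $p_n^{\ge2}(\alpha)$ by the symmetry noted above. Zero entries of $\alpha$ are harmless, since $p^{\ge2}_n$ is unaffected by appending or removing zero coordinates. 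Negative entries give $0$ on both sides, by the convention of the paper. Summing over $\sigma$ gives the stated formula.

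The main obstacle is a care point rather than a real difficulty: justifying the identification for the inhomogeneous series with constant term, that is, why including $h_0$ amounts exactly to "at most $n$ parts". The bijection of the second step handles this, since padding by zero vectors accounts for the missing parts.
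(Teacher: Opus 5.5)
Your proposal is correct and matches the paper's proof. You identify the coefficient of $m_\mu$ in $h_n[h_0+h_2+h_3+\cdots]$ with $p_n^{\ge2}(\mu)$, with the $h_0$ summand padding each multiset to exactly $n$ parts, and then conclude from the Jacobi--Trudi identity and the duality of the $h$ and $m$ bases as in Theorem~\ref{schur-exp-plethysm}; your remarks on reading the inhomogeneous series degreewise, and on zero and negative entries, fill in details the paper leaves implicit.
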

\begin{proof}
As in the proof of Lemma~\ref{monomial-complete}, the coefficient of $m_\mu$ in
the monomial expansion of $h_{n}[h_0+h_2+h_3+h_4+\cdots]$ is the number of
vector partitions of $\mu$ with at most $n$ parts, each of size at least $2$
(the summand $h_0$ accounts for the empty parts). The lemma follows from the
Jacobi--Trudi identity, as in Theorem~\ref{schur-exp-plethysm}.
\end{proof}

We now state the main result of this section.

\begin{theorem}\label{omitting-h-1}
For all $m,n\geq |\lambda|$,
$$\langle h_n[h_m],s_{\lambda[mn]}\rangle=\langle h_{\lfloor|\lambda|/2\rfloor}[H-h_1],s_\lambda\rangle,$$
where $H=\sum_{i\ge0}h_i$.
\end{theorem}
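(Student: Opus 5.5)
The plan is to chain together the results already established. By the proof of Theorem~\ref{proof1}, for $m,n\ge|\lambda|$ we have
$$\langle h_n[h_m],s_{\lambda[mn]}\rangle=\sum_{\sigma\in S_{\ell+1}}\sgn(\sigma)\,p\big(\lambda_1+\sigma(2)-2,\dots,\lambda_\ell+\sigma(\ell+1)-(\ell+1)\big),$$
where $\ell=\ell(\lambda)$. Applying Theorem~\ref{equality-of-vector-partitions} with $n=\ell$ and the vector $\lambda$, this equals
$$\sum_{\sigma\in S_\ell}\sgn(\sigma)\,p^{\ge2}\big(\lambda+\omega(\sigma)\big),\qquad \omega(\sigma)(i)=\sigma(i)-i.$$
Lemma~\ref{greater-2-lemma} with $n=k:=\lfloor|\lambda|/2\rfloor$ identifies $\langle h_k[H-h_1],s_\lambda\rangle$ with the same signed sum, but with $p^{\ge2}_k$ in place of $p^{\ge2}$. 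So it remains to show that $p^{\ge2}_k(\alpha)=p^{\ge2}(\alpha)$ for every vector $\alpha=\lambda+\omega(\sigma)$ occurring in the sum.

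This reduces to a size count. We have $|\lambda+\omega(\sigma)|=|\lambda|$, since $\sum_i(\sigma(i)-i)=0$. If some entry of $\alpha$ is negative, both functions vanish by convention. Otherwise, any vector partition of $\alpha$ all of whose parts have size at least $2$ has at most $\lfloor|\alpha|/2\rfloor=k$ parts. Hence the restriction to at most $k$ parts is vacuous, and $p^{\ge2}_k(\alpha)=p^{\ge2}(\alpha)$ term by term.

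Two points need care. First, Lemma~\ref{greater-2-lemma} uses the Jacobi--Trudi expansion over $S_{\ell(\lambda)}$ regardless of how $\ell(\lambda)$ compares with $k$. This is fine, because the monomial-coefficient argument there does not require $\ell\le k$: the coefficient of $m_\mu$ is simply $p^{\ge2}_k(\mu)$, and it vanishes automatically when $\mu$ is too long. Second, $h_0=1$ inside the plethysm accounts for padding by empty parts, which is what makes the count "at most $k$ parts" rather than "exactly $k$". The main (mild) obstacle is verifying this bookkeeping; the rest is direct substitution.
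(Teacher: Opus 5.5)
Your proposal is correct and follows the paper's own proof essentially step for step. You chain the same four ingredients: the signed sum over $S_{\ell+1}$ from the proof of Theorem~\ref{proof1}, Theorem~\ref{equality-of-vector-partitions} to pass to $p^{\ge2}$ over $S_\ell$, the size count giving $p^{\ge2}_{\lfloor|\lambda|/2\rfloor}=p^{\ge2}$ on vectors of size $|\lambda|$, and Lemma~\ref{greater-2-lemma}.
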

\begin{proof}
Write $\ell=\ell(\lambda)$. For every $\sigma\in S_{\ell}$, the vector
$(\lambda_1+\sigma(1)-1,\dots,\lambda_{\ell}+\sigma(\ell)-\ell)$ has size
$|\lambda|$, so any of its vector partitions with all parts of size at least
$2$ has at most $\lfloor|\lambda|/2\rfloor$ parts. By the proof of
Theorem~\ref{proof1}, for all $m,n\geq |\lambda|$,
\begin{align*}
\langle h_n[h_m],s_{\lambda[mn]}\rangle
&=\sum_{\sigma\in S_{\ell+1}}\sgn(\sigma)\, p\big(\lambda_1+\sigma(2)-2,\dots,\lambda_{\ell}+\sigma(\ell+1)-(\ell+1)\big)\\
&=\sum_{\sigma\in S_{\ell}}\sgn(\sigma)\,p^{\geq 2}\big(\lambda_1+\sigma(1)-1,\dots,\lambda_{\ell}+\sigma(\ell)-\ell\big) &&\text{(Theorem~\ref{equality-of-vector-partitions})}\\
&=\sum_{\sigma\in S_{\ell}}\sgn(\sigma)\,p_{\lfloor |\lambda|/2\rfloor}^{\geq 2}\big(\lambda_1+\sigma(1)-1,\dots,\lambda_{\ell}+\sigma(\ell)-\ell\big)\\
&=\langle h_{\lfloor |\lambda|/2\rfloor}[h_0+h_2+h_3+h_4+\cdots],s_\lambda\rangle &&\text{(Lemma~\ref{greater-2-lemma})}\\
&=\langle h_{\lfloor |\lambda|/2\rfloor}[H-h_1],s_\lambda\rangle.
\end{align*}
\end{proof}

Tables~\ref{table-22}--\ref{table-6} show the coefficients
$\langle h_n[h_m], s_{\lambda[mn]}\rangle$ for small $m$ and $n$.

\begin{remark}
The bound $m,n\ge|\lambda|$ is sharp for one-row partitions: for $\lambda=(6)$,
the stable value is reached only when $m,n\geq 6$ (Table~\ref{table-6}). For
other partitions the coefficients stabilize earlier, as explained by
Theorem~\ref{thm-perp}: for $\lambda=(2,2)$ they are constant for $m\ge2$ and
$n\ge3$, and for $\lambda=(4,2)$ they are constant for $m\ge4$ and $n\ge5$.
The latter range cannot be improved in either $m$ or $n$ (Table~\ref{table-42}).
\end{remark}

\begin{table}[H]\centering
\caption{$\langle h_n[h_m], s_{\lambda[mn]}\rangle$ for $\lambda=(2,2)$, $|\lambda|=4$. Shaded cells: $m,n\ge|\lambda|$. A dash means $\lambda[mn]$ is not a partition.}\label{table-22}
\begin{tabular}{c|ccccccc}
$m \backslash n$ & 1 & 2 & 3 & 4 & 5 & 6 & 7 \\ \hline
1 & -- & -- & -- & -- & -- & 0 & 0 \\
2 & -- & -- & 1 & 1 & 1 & 1 & 1 \\
3 & -- & 0 & 1 & 1 & 1 & 1 & 1 \\
4 & -- & 0 & 1 & \cellcolor{gray!25}1 & \cellcolor{gray!25}1 & \cellcolor{gray!25}1 & \cellcolor{gray!25}1 \\
5 & -- & 0 & 1 & \cellcolor{gray!25}1 & \cellcolor{gray!25}1 & \cellcolor{gray!25}1 & \cellcolor{gray!25}1 \\
6 & 0 & 0 & 1 & \cellcolor{gray!25}1 & \cellcolor{gray!25}1 & \cellcolor{gray!25}1 & \cellcolor{gray!25}1 \\
7 & 0 & 0 & 1 & \cellcolor{gray!25}1 & \cellcolor{gray!25}1 & \cellcolor{gray!25}1 & \cellcolor{gray!25}1 \\
\end{tabular}
\end{table}

\begin{table}[H]\centering
\caption{$\langle h_n[h_m], s_{\lambda[mn]}\rangle$ for $\lambda=(4,2)$, $|\lambda|=6$. Shaded cells: $m,n\ge|\lambda|$. A dash means $\lambda[mn]$ is not a partition.}\label{table-42}
\begin{tabular}{c|ccccccccc}
$m \backslash n$ & 1 & 2 & 3 & 4 & 5 & 6 & 7 & 8 & 9 \\ \hline
1 & -- & -- & -- & -- & -- & -- & -- & -- & -- \\
2 & -- & -- & -- & -- & 1 & 1 & 1 & 1 & 1 \\
3 & -- & -- & -- & 1 & 2 & 2 & 2 & 2 & 2 \\
4 & -- & -- & 1 & 2 & 3 & 3 & 3 & 3 & 3 \\
5 & -- & 0 & 1 & 2 & 3 & 3 & 3 & 3 & 3 \\
6 & -- & 0 & 1 & 2 & 3 & \cellcolor{gray!25}3 & \cellcolor{gray!25}3 & \cellcolor{gray!25}3 & \cellcolor{gray!25}3 \\
7 & -- & 0 & 1 & 2 & 3 & \cellcolor{gray!25}3 & \cellcolor{gray!25}3 & \cellcolor{gray!25}3 & \cellcolor{gray!25}3 \\
8 & -- & 0 & 1 & 2 & 3 & \cellcolor{gray!25}3 & \cellcolor{gray!25}3 & \cellcolor{gray!25}3 & \cellcolor{gray!25}3 \\
9 & -- & 0 & 1 & 2 & 3 & \cellcolor{gray!25}3 & \cellcolor{gray!25}3 & \cellcolor{gray!25}3 & \cellcolor{gray!25}3 \\
\end{tabular}
\end{table}

\begin{table}[H]\centering
\caption{$\langle h_n[h_m], s_{\lambda[mn]}\rangle$ for $\lambda=(6)$, $|\lambda|=6$. Shaded cells: $m,n\ge|\lambda|$. A dash means $\lambda[mn]$ is not a partition.}\label{table-6}
\begin{tabular}{c|ccccccccc}
$m \backslash n$ & 1 & 2 & 3 & 4 & 5 & 6 & 7 & 8 & 9 \\ \hline
1 & -- & -- & -- & -- & -- & -- & -- & -- & -- \\
2 & -- & -- & -- & -- & -- & 1 & 1 & 1 & 1 \\
3 & -- & -- & -- & 1 & 1 & 2 & 2 & 2 & 2 \\
4 & -- & -- & 1 & 2 & 2 & 3 & 3 & 3 & 3 \\
5 & -- & -- & 1 & 2 & 2 & 3 & 3 & 3 & 3 \\
6 & -- & 1 & 2 & 3 & 3 & \cellcolor{gray!25}4 & \cellcolor{gray!25}4 & \cellcolor{gray!25}4 & \cellcolor{gray!25}4 \\
7 & -- & 1 & 2 & 3 & 3 & \cellcolor{gray!25}4 & \cellcolor{gray!25}4 & \cellcolor{gray!25}4 & \cellcolor{gray!25}4 \\
8 & -- & 1 & 2 & 3 & 3 & \cellcolor{gray!25}4 & \cellcolor{gray!25}4 & \cellcolor{gray!25}4 & \cellcolor{gray!25}4 \\
9 & -- & 1 & 2 & 3 & 3 & \cellcolor{gray!25}4 & \cellcolor{gray!25}4 & \cellcolor{gray!25}4 & \cellcolor{gray!25}4 \\
\end{tabular}
\end{table}

\section{Polynomial representations of \texorpdfstring{$GL_n(\C)$}{GLn(C)} and restriction coefficients}\label{section-on-polynomial-rep}
A group homomorphism $\phi\colon GL_n(\C)\to GL(V)$ is called a
\emph{polynomial representation} of $GL_n(\C)$ if the matrix coefficients of
$\phi(g)$ with respect to a basis of $V$ are polynomial functions in the
entries of $g$. Irreducible polynomial representations of $GL_n(\C)$ of degree
$d$ are parametrized by partitions of $d$ with at most $n$ parts; these are the
\emph{Weyl modules}, denoted $W_\lambda(\C^n)$. For a symmetric function $f$
(possibly an infinite sum of homogeneous terms, such as $H$), we write $(f)_d$
for its homogeneous component of degree $d$.

The symmetric group $S_n\subseteq GL_n(\C)$ is the subgroup of permutation
matrices. Finding a satisfactory combinatorial interpretation for the
multiplicity $r_\lambda^\mu$ of the Specht module $V_\mu$ in the restriction
$W_\lambda(\C^n)\big\downarrow_{S_n}$ remains an open problem at the
intersection of representation theory and algebraic combinatorics. By a
well-known formula of Littlewood \cite{littlewood}, these \emph{restriction
coefficients} are given by
$$r_\lambda^\mu=\langle s_\lambda,s_\mu[H]\rangle,\qquad\text{where } H=\sum_{i\geq 0}h_i.$$
In particular, $r_\lambda^{(n)}=\langle s_\lambda,h_n[H]\rangle$.
Narayanan, Paul, Prasad and Srivastava \cite{narayanan} gave a conceptual
explanation of this formula. They defined a functor $\mathrm{Ind}^d$ from the
category $\mathrm{Rep}\,S_n$ to the category $\mathrm{Rep}^d\,GL_n(\C)$ of
degree $d$ polynomial representations of $GL_n(\C)$, right adjoint to
restriction, in such a way that $(s_\mu[H])_d$ is the character of the induced
module $\mathrm{Ind}^d(V_\mu)$.

\subsection{Polynomial induction}
Let $X=(x_{ij})_{1\le i,j\le n}$ denote a generic element of $GL_n(\C)$, so
that the entries $x_{ij}$ are the coordinate functions on $GL_n(\C)$. Any
polynomial function on $GL_n(\C)$ can be written as $f(X)=\sum_{A}c_AX^A$,
where $X^A=\prod_{1\leq i,j\leq n}x_{ij}^{a_{ij}}$ is indexed by the matrix
$A=(a_{ij})_{1\leq i,j\leq n}$ of non-negative integers. We identify $S_n$
with the group of permutation matrices $w\in GL_n(\C)$, so that $wX$ is
obtained from $X$ by permuting its rows. Consider the space of polynomial
functions of degree $d$ that are invariant under row permutations, i.e.,
$f(wX)=f(X)$ for all $w\in S_n$. The group $GL_n(\C)$ acts on this space by
right multiplication, $(g\cdot f)(X)=f(Xg)$ for $g\in GL_n(\C)$, where $Xg$
is the matrix product. By \cite[Theorem~4.7]{narayanan}, the character of this
representation is
$$\mathrm{Ch}\big(\mathrm{Ind}^d\ _{S_n}^{GL_n(\C)}\mathbf{1}\big)=(h_n[H])_d.$$
We now modify this induced module and relate its character to stable Foulkes
coefficients.

\section{Modified polynomial induction}\label{modified-poly-rep}
We keep the notation of Section~\ref{section-on-polynomial-rep}: $X=(x_{ij})$
is a generic element of $GL_n(\C)$, a permutation matrix $w\in S_n$ acts on
polynomial functions by $f(X)\mapsto f(wX)$, permuting the rows of $X$, and
$g\in GL_n(\C)$ acts by $(g\cdot f)(X)=f(Xg)$. For a matrix $A=(a_{ij})$, let
$\mathrm{row}(A)=\big\{\sum_{j}a_{1j},\sum_{j}a_{2j},\dots, \sum_{j}a_{nj}\big\}$
be its set of row sums. We consider the polynomial functions of degree $d$
that are invariant under permutation of rows and involve only monomials $X^A$
with \emph{no} row sum equal to $1$:
\begin{equation}\label{def-modified-poly-ind}
\mathrm{Ind}_{\hat{1}}^d\ _{S_n}^{GL_n(\C)}\mathbf{1}:=\Big\{f(X)=\sum_Ac_AX^A\ \Big|\ f(wX)=f(X)\ \text{for all}\ w\in S_n,\ \text{and}\ c_A\neq0\Rightarrow 1\notin \mathrm{row}(A)\Big\}.
\end{equation}

\begin{proposition}
$\mathrm{Ind}_{\hat{1}}^d\ _{S_n}^{GL_n(\C)}\mathbf{1}$ is a
$GL_n(\C)$-subrepresentation of $\mathrm{Ind}^d\ _{S_n}^{GL_n(\C)}\mathbf{1}$.
\end{proposition}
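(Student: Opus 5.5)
We must show two things: the subspace is a linear subspace of $\mathrm{Ind}^d\ _{S_n}^{GL_n(\C)}\mathbf{1}$ (it clearly is, being the intersection of the row-permutation invariants of degree $d$ with the span of monomials having no row sum $1$), and that it is stable under the right-translation action $f(X)\mapsto f(Xg)$. The key observation is that right multiplication by $g$ acts on each row of $X$ separately, by the same linear substitution $x_{i\cdot}\mapsto x_{i\cdot}g$. It therefore preserves the multi-degree of a polynomial with respect to the grading by rows.

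Concretely, grade $\C[x_{ij}]$ by $\Z_{\ge0}^n$, assigning degree $e_i$ to every $x_{ij}$ in row $i$. A monomial $X^A$ then has multidegree $(\sum_j a_{1j},\dots,\sum_j a_{nj})$, the vector of row sums of $A$. For $g\in GL_n(\C)$, the substitution sends $x_{ij}$ to $\sum_k x_{ik}g_{kj}$, which is homogeneous of degree $e_i$. Hence $X^A\mapsto (Xg)^A$ is a product of row-homogeneous linear forms. Its expansion involves only monomials $X^B$ whose row-sum vector equals that of $A$.

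Now let $f$ be in the modified space, and write $f=\sum_A c_AX^A$ with no row sum of any $A$ (with $c_A\ne0$) equal to $1$. Then $g\cdot f=\sum_A c_A (Xg)^A$ is a combination of monomials $X^B$ whose row-sum multisets $\mathrm{row}(B)$ coincide with those of the $A$'s. So $1\notin\mathrm{row}(B)$, although cancellation may remove some terms. Row-permutation invariance is preserved because the left and right actions commute: $(g\cdot f)(wX)=f(wXg)=f(Xg)=(g\cdot f)(X)$, using $f(wY)=f(Y)$ with $Y=Xg$. Total degree $d$ is also preserved. Thus $g\cdot f$ lies in the modified space.

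The only point needing care is the second step, that right translation preserves the row multigrading. This follows at once from $(Xg)_{ij}$ depending only on row $i$ of $X$. Finally, $g\mapsto(f\mapsto g\cdot f)$ is a polynomial representation, since it is the restriction of the one on $\mathrm{Ind}^d$.
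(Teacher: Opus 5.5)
Your proof is correct and follows the same route as the paper. Row-permutation invariance survives because left and right multiplication commute, and the row-sum condition survives because each $(Xg)_{ij}=\sum_k x_{ik}g_{kj}$ is a linear form in row $i$ alone, so $(Xg)^A$ expands only into monomials $X^B$ with the same row sums as $A$ (your $\Z_{\ge0}^n$ row-multigrading is a tidy repackaging of the paper's factorization into the $P_i(X)$).
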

\begin{proof}
Let $g=(g_{kj})\in GL_n(\C)$. Invariance under row permutations is preserved
by the action, since
$(g\cdot f)(wX)=f(wXg)=f(Xg)$. It remains to check that the row-sum condition
is preserved. Let $R_i(A) = \sum_{j=1}^n a_{ij}$ denote the $i$-th row sum of
$A$. We have
\begin{equation*}
    (Xg)^A = \prod_{i=1}^n \prod_{j=1}^n (Xg)_{ij}^{a_{ij}} = \prod_{i=1}^n \prod_{j=1}^n \Big( \sum_{k=1}^n x_{ik} g_{kj} \Big)^{a_{ij}}.
\end{equation*}
For a fixed $i$, each $(Xg)_{ij} = \sum_{k} x_{ik} g_{kj}$ is homogeneous of
degree $1$ in the variables $x_{i1}, \dots, x_{in}$ of the $i$-th row of $X$,
so $P_i(X) = \prod_{j=1}^n (Xg)_{ij}^{a_{ij}}$ is homogeneous of degree
$R_i(A)$ in these variables and involves no other variables. Expanding,
\begin{equation*}
    (Xg)^A=\prod_{i=1}^n P_i(X) = \sum_{B} K_{A,B}(g)\, X^B,
\end{equation*}
and every monomial $X^B$ with $K_{A,B}(g)\neq0$ has degree $R_i(A)$ in the
variables of the $i$-th row, that is, $R_i(B)=R_i(A)$ for all $i$. Hence
$\mathrm{row}(B)=\mathrm{row}(A)$, and $1\notin \mathrm{row}(A)$ implies
$1\notin \mathrm{row}(B)$.
\end{proof}

\begin{proposition}\label{Equal-inner-products}
$\mathrm{Ch}\big(\mathrm{Ind}_{\hat{1}}^d\ _{S_n}^{GL_n(\C)}\mathbf{1}\big)=(h_n[H-h_1])_d=(h_n[h_0+h_2+h_3+\cdots])_d.$
\end{proposition}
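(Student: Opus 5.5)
We compute the character by evaluating the trace of a diagonal matrix $g=\mathrm{diag}(t_1,\dots,t_n)$ on an explicit monomial basis. The subspace is spanned by row-orbit sums of monomials. For a matrix $A$ with no row sum equal to $1$, set
\[
m_A(X)=\sum_{B\in S_n\cdot A}X^B,
\]
where $S_n$ permutes the rows of $A$. These $m_A$, with $A$ running over $S_n$-orbits of such matrices with total sum $d$, form a basis of $\mathrm{Ind}_{\hat{1}}^d\ _{S_n}^{GL_n(\C)}\mathbf{1}$. The reason is that a row-invariant polynomial has coefficients $c_A$ constant on row-orbits, and the row-sum condition is a condition on orbits, since permuting rows permutes the multiset $\mathrm{row}(A)$. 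An orbit of matrices is the same thing as a multiset of $n$ row vectors $\{a^1,\dots,a^n\}$ in $\Z_{\ge0}^n$, with $|a^i|\neq 1$ for every $i$ and $\sum_i|a^i|=d$.

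Next we compute the action of $g$. Since $(Xg)_{ij}=x_{ij}t_j$, we get $g\cdot X^A=\prod_j t_j^{c_j(A)}\,X^A$, where $c_j(A)=\sum_i a_{ij}$ is the $j$-th column sum. Column sums are invariant under row permutations, so each $m_A$ is a weight vector of weight $t^{c(A)}$. The character is therefore
\[
\sum_{\{a^1,\dots,a^n\}} t^{a^1+\cdots+a^n},
\]
summed over multisets of $n$ vectors in $\Z_{\ge0}^n$ with each $|a^i|\in\{0,2,3,\dots\}$ and total size $d$.

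We then compare this with the plethysm. Writing $F=h_0+h_2+h_3+\cdots=\sum_{|a|\neq1}x^a$ as a sum of monomials with coefficient $1$, the plethysm $h_n[F]$ is the sum of all products of $n$ monomials of $F$ taken as a multiset. This is the same reasoning as in Lemma~\ref{monomial-complete} and Lemma~\ref{greater-2-lemma}. Hence $h_n[F](t_1,\dots,t_n)$ equals the sum over multisets $\{a^1,\dots,a^n\}$ with $|a^i|\neq1$ of $t^{\sum a^i}$, and its degree-$d$ component matches the character above term by term. Since a polynomial representation of $GL_n(\C)$ is determined by its character on $n$ variables, and $(h_n[H-h_1])_d$ restricted to $n$ variables is exactly this sum, the two characters agree. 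Finally, $H-h_1=h_0+h_2+h_3+\cdots$ by definition.

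The only delicate point is the identification of multisets on the two sides. On the representation side, zero rows correspond to the $h_0$ summand, so orbits of matrices with exactly $n$ rows correspond to multisets of exactly $n$ monomials of $F$, some of them equal to $1$. One must also check that orbit sums are linearly independent, which holds because distinct orbits have disjoint monomial supports. Everything else is bookkeeping.
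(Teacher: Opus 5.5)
Your proof is correct, but it takes a different route from the paper's.

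\textbf{The paper's route.} It argues at the level of modules. The coordinate ring of $n\times n$ matrices is the tensor product, over the rows $x_i$, of the polynomial rings in each row, and each of these is $GL_n(\C)$-stable under $f(X)\mapsto f(Xg)$. The row-sum condition cuts out $W_1\otimes\cdots\otimes W_n$, where $W_i\cong\bigoplus_{k\ne1}\mathrm{Sym}^k(\C^n)$. Since $S_n$ permutes these isomorphic tensor factors, the invariants are $\mathrm{Sym}^n(W)$. Its character is $h_n[\mathrm{Ch}(W)]=h_n[h_0+h_2+h_3+\cdots]$ by the plethystic formula for symmetric powers.

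\textbf{Your route.} You compute the character directly as a torus trace on the orbit-sum basis, reading each weight off the column sums. You then match the resulting sum over multisets of $n$ row vectors with the monomial expansion of $h_n[F]$. This uses that $F=\sum_{|a|\ne1}x^a$ is a multiplicity-free sum of monomials, which is the same bookkeeping as in Lemmas~\ref{monomial-complete} and~\ref{greater-2-lemma}.

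\textbf{Comparison.} The paper's argument is shorter and yields more: an explicit $GL_n(\C)$-isomorphism between the modified induced module and the degree-$d$ part of $\mathrm{Sym}^n(W)$. This explains conceptually why a plethysm $h_n[\cdot]$ appears, but it relies on the fact $\mathrm{Ch}(\mathrm{Sym}^nU)=h_n[\mathrm{Ch}(U)]$ for graded $U$. Your argument is fully elementary: it needs only the definition of the character and the monomial description of plethysm, and it exhibits an explicit weight basis. It proves only the character identity, which is exactly what is claimed. For that reason, your closing appeal to polynomial representations being determined by their characters is unnecessary, since the two sides are already equal as polynomials in $t_1,\dots,t_n$.
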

\begin{proof}
For $1\le i\le n$, let $x_i=(x_{i1},\dots,x_{in})$ denote the $i$-th row of
$X$, regarded as a row vector. Since the $i$-th row of $Xg$ is $x_ig$, the
action $(g\cdot f)(X)=f(Xg)$ preserves the space of polynomials in $x_i$ alone,
acting there by $f(x_i)\mapsto f(x_ig)$. Let $W_i$ be the subspace of
polynomials in $x_i$ with no terms of degree $1$. Taking homogeneous
components, $W_i\cong\bigoplus_{k\ne1}\mathrm{Sym}^k(\C^n)$ as
$GL_n(\C)$-modules, so
\begin{equation*}
    \mathrm{Ch}(W_i) = \sum_{k\neq 1} h_k = h_0 + h_2 + h_3 + h_4 + \cdots.
\end{equation*}
A polynomial in all the variables $x_{ij}$ has no monomial with a row sum
equal to $1$ exactly when it lies in $W_1\otimes W_2\otimes\cdots\otimes W_n$.
The action of $S_n$ by permuting the rows of $X$ permutes these tensor
factors, and all the $W_i$ are isomorphic to $W:=W_1$, so the subspace of
$S_n$-invariants is
\begin{equation*}
    \Big( \bigotimes_{i=1}^n W_i \Big)^{S_n} \cong \mathrm{Sym}^n(W),
\end{equation*}
whose character is $h_n[\mathrm{Ch}(W)]=h_n[h_0 + h_2 + h_3 + \cdots]$. Taking
the component of degree $d$ gives the result.
\end{proof}

\begin{theorem}\label{final-corollary}
For all $m,n\geq |\lambda|$ and $N\ge\ell(\lambda)+1$,
$$\langle W_{\lambda[mn]},\mathrm{Sym}^n(\mathrm{Sym}^m(\C^N))\rangle=\langle W_\lambda(\C^{\lfloor|\lambda|/2\rfloor}),\mathrm{Ind}_{\hat{1}}^{|\lambda|}\ _{S_{\lfloor|\lambda|/2\rfloor}}^{GL_{\lfloor|\lambda|/2\rfloor}}\mathbf{1}\rangle.$$
\end{theorem}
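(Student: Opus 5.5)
The plan is to chain together three facts already established, with a little care about the number of variables. First, the left-hand side: as recalled in the introduction, the character of $\mathrm{Sym}^n(\mathrm{Sym}^m(\C^N))$ is $h_n[h_m](x_1,\dots,x_N)$. Polynomial $GL_N(\C)$-modules are determined by their characters, and the Schur polynomials $s_\nu(x_1,\dots,x_N)$ with $\ell(\nu)\le N$ are linearly independent, while those with $\ell(\nu)>N$ vanish. Hence the multiplicity of $W_{\lambda[mn]}(\C^N)$ is $\langle h_n[h_m],s_{\lambda[mn]}\rangle$ whenever $N\ge\ell(\lambda[mn])=\ell(\lambda)+1$. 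Since $m,n\ge|\lambda|$, Theorem~\ref{omitting-h-1} identifies this with $\langle h_k[H-h_1],s_\lambda\rangle$, where $k=\lfloor|\lambda|/2\rfloor$. Here we also use that $mn-|\lambda|\ge\lambda_1$ in this range, so that $\lambda[mn]$ is a partition.

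Second, the right-hand side. By Proposition~\ref{Equal-inner-products} with $n$ replaced by $k$ and $d=|\lambda|$, the character of $\mathrm{Ind}_{\hat1}^{|\lambda|}{}_{S_k}^{GL_k}\mathbf 1$ is $(h_k[H-h_1])_{|\lambda|}$ evaluated in $k$ variables. Expanding in Schur functions, the multiplicity of $W_\lambda(\C^k)$ is the coefficient of $s_\lambda$, provided $\ell(\lambda)\le k$. Since only the degree-$|\lambda|$ component pairs with $s_\lambda$, this coefficient is $\langle h_k[H-h_1],s_\lambda\rangle$, which matches the first step.

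The main obstacle is the case $\ell(\lambda)>k$. There $W_\lambda(\C^k)$ does not exist, and the right-hand side must be read as $0$. I would show that the left-hand side also vanishes. By Lemma~\ref{greater-2-lemma}, the value equals $\sum_{\sigma}\sgn(\sigma)p_k^{\ge2}(\lambda+\omega(\sigma))$, which is the coefficient of $s_\lambda$ in $h_k[H-h_1]$. Every monomial in $h_k[h_0+h_2+\cdots]$ of degree $|\lambda|$ involves at most $k$ variables, since each of the at most $k$ factors of size at least $2$ contributes a monomial $x^{\gamma}$. So the symmetric function $(h_k[H-h_1])_{|\lambda|}$ is a combination of monomial functions $m_\mu$ with $\ell(\mu)\le k$. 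The span of such $m_\mu$ equals the span of the $s_\mu$ with $\ell(\mu)\le k$, because the transition matrix between monomial and Schur functions is unitriangular in dominance order and dominance does not increase length. Hence the coefficient of $s_\lambda$ is $0$ when $\ell(\lambda)>k$, consistent with the convention that the multiplicity is zero. With this convention recorded, the two sides agree in all cases, which proves the theorem.
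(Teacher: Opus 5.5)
Your first two paragraphs are exactly the paper's proof: identify the left side with $\langle h_n[h_m],s_{\lambda[mn]}\rangle$, apply Theorem~\ref{omitting-h-1}, and read the right side off the character in Proposition~\ref{Equal-inner-products}. You are also right that the case $\ell(\lambda)>k$ needs an argument. The paper omits it and only remarks afterwards that the stable coefficient then vanishes, tacitly reading the right side as $0$. However, your argument for that case is wrong on two counts. First, a monomial of $(h_k[h_0+h_2+h_3+\cdots])_{|\lambda|}$ corresponds to a vector partition with at most $k$ \emph{parts}. A single part $\gamma$ may have up to $|\gamma|$ nonzero coordinates, so the monomial can involve more than $k$ variables. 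For $|\lambda|=2$ one has $k=1$, and the relevant component is $h_2$, which contains $x_1x_2$. Second, even if that claim held, it would not give the conclusion. We have $s_\nu=\sum_{\mu\trianglelefteq\nu}K_{\nu\mu}m_\mu$, and $\mu\trianglelefteq\nu$ forces $\ell(\mu)\ge\ell(\nu)$, so the triangularity runs the wrong way for your span statement. For example, $m_{(2)}=s_{(2)}-s_{(1,1)}$ lies in the span of monomials of length at most $1$ but has a nonzero $s_{(1,1)}$-coefficient.

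The fix is to use the complete homogeneous basis, where the triangularity goes the right way: $h_\mu=\sum_{\nu\trianglerighteq\mu}K_{\nu\mu}s_\nu$, and $\nu\trianglerighteq\mu$ implies $\ell(\nu)\le\ell(\mu)$. Let $g=H-h_1$, the character of $W=\bigoplus_{i\ne1}\mathrm{Sym}^i(\C^N)$. Since $\mathrm{Sym}^k(W)$ is a direct summand of $W^{\otimes k}$, the difference $g^k-h_k[g]$ is Schur-positive in each degree, just as in Lemma~\ref{lam-1-ge-m}. Every term of $g^k$ is some $h_{i_1}\cdots h_{i_k}=h_\mu$ with $\ell(\mu)\le k$, so $g^k$ involves only $s_\nu$ with $\ell(\nu)\le k$. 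Since $h_k[g]$ is itself Schur-positive, $\langle h_k[H-h_1],s_\lambda\rangle=0$ whenever $\ell(\lambda)>k$. With this replacement your proof is complete. It also justifies the paper's closing remark that the stable Foulkes coefficient vanishes when $\ell(\lambda)>\lfloor|\lambda|/2\rfloor$, rather than obtaining it from a convention.
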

\begin{proof}
Since $N\ge\ell(\lambda[mn])$ and the character of
$\mathrm{Sym}^n(\mathrm{Sym}^m(\C^N))$ is $h_n[h_m](x_1,\dots,x_N)$, the
left-hand side equals $\langle h_n[h_m],s_{\lambda[mn]}\rangle$. By
Proposition~\ref{Equal-inner-products},
$$\mathrm{Ch}\big(\mathrm{Ind}_{\hat{1}}^{|\lambda|}\ _{S_{\lfloor |\lambda|/2\rfloor}}^{GL_{\lfloor |\lambda|/2\rfloor}(\C)}\mathbf{1}\big)=(h_{\lfloor |\lambda|/2\rfloor}[H-h_1])_{|\lambda|},$$
evaluated in $\lfloor |\lambda|/2\rfloor$ variables. The corollary follows from
Theorem~\ref{omitting-h-1}.
\end{proof}

Thus, for $m,n\ge|\lambda|$, the multiplicity of $W_{\lambda[mn]}$ in
$\mathrm{Sym}^n(\mathrm{Sym}^m(\C^N))$ equals the multiplicity of
$W_\lambda(\C^{\lfloor |\lambda|/2\rfloor})$ in
$\mathrm{Ind}_{\hat{1}}^{|\lambda|}\ _{S_{\lfloor |\lambda|/2\rfloor}}^{GL_{\lfloor |\lambda|/2\rfloor}}\mathbf{1}$,
which is independent of $m$ and $n$. In particular, the stable Foulkes
coefficient of $\lambda$ is zero when $\ell(\lambda)>\lfloor|\lambda|/2\rfloor$.

\section{Stability using the action of \texorpdfstring{$h_r^\perp$}{hr-perp}}\label{stability-using-perp}

We recall perp operators on the ring of symmetric functions; for details, see
\cite[Ch.~I, Sec.~5]{macdonald} and \cite{zabrocki}. For applications of perp
operators to the computation of plethysm coefficients, see \cite{colmenarejo}.

\begin{definition}
For a symmetric function $f$, let $f^\perp$ be the adjoint of multiplication by
$f$ with respect to the Hall inner product:
$\langle f^\perp g,\, u\rangle=\langle g,\, f u\rangle$ for all $g,u$.
\end{definition}

We use the following standard facts:
\begin{equation}\label{e-perp}
e_i^\perp s_\lambda=s_{\lambda/(1^i)},
\end{equation}
\begin{equation}\label{sum-with-perp}
\sum_{r\ge0} (h_r^\perp g)[X]=g[X+1],
\end{equation}
\begin{equation}\label{co-prod-for-h-n}
    h_n[X+Y]=\sum_{a+b=n}h_a[X]\,h_b[Y].
\end{equation}
\begin{equation}
    H[X+Y]=H[X]\cdot H[Y]
\end{equation}

We also set $A_m:=h_1+h_2+\cdots+h_m$.

\begin{lemma}\label{lam-1-ge-m}
If $\langle h_n[h_m],s_\lambda\rangle>0$, then $\lambda_1\ge m$.
\end{lemma}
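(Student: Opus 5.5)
We prove the lemma using the monomial expansion of Lemma~\ref{monomial-complete}: $h_n[h_m]=\sum_{\mu\vdash mn}p_n^m(\mu)\,m_\mu$. If $\langle h_n[h_m],s_\lambda\rangle>0$, then $s_\lambda$ occurs in the Schur expansion of $h_n[h_m]$. Since $h_n[h_m]$ is Schur positive, being the character of $\mathrm{Sym}^n(\mathrm{Sym}^m(\C^N))$, and $s_\lambda=m_\lambda+\sum_{\mu<\lambda}K_{\lambda\mu}m_\mu$ has nonnegative Kostka coefficients, the monomial $m_\lambda$ then appears in $h_n[h_m]$ with positive coefficient. Hence $p_n^m(\lambda)>0$.

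It therefore suffices to show that $p_n^m(\lambda)>0$ forces $\lambda_1\ge m$. A vector partition counted by $p_n^m(\lambda)$ writes $\lambda=\gamma^1+\dots+\gamma^n$ with every $|\gamma^j|=m$. I do not expect a pointwise argument to work here. The dominance order gives only $\lambda\trianglerighteq$ the partition with parts $\lambda$ sorted, and $\lambda_1$ is at least the average $mn/\ell(\lambda)$, which need not exceed $m$.

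The approach I would use instead is to exploit dominance from the top. Every Schur function $s_\lambda$ in $h_n[h_m]$ satisfies $\lambda\trianglerighteq$ (the $\trianglelefteq$-minimal constituents), and this does not directly bound $\lambda_1$ from below either. So I would switch to the skewing operators. We have $\langle h_n[h_m],s_\lambda\rangle=\langle h_{\lambda_1+1}^\perp\cdots\rangle$, but the cleanest route is conjugation. Applying $\omega$, the coefficient equals $\langle \omega(h_n[h_m]),s_{\lambda'}\rangle$, and $\lambda_1=\ell(\lambda')$. Now $\omega(h_n[h_m])$ is $h_n[e_m]$ or $e_n[e_m]$ depending on the parity of $m$. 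In either case it is a polynomial in $e_m$ evaluated at monomials $x_{i_1}\cdots x_{i_m}$ with distinct indices. Its monomial expansion therefore involves only monomials in which the product of each block of $m$ variables uses $m$ distinct variables, so every monomial uses at least $m$ variables.

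The key step is to promote this to Schur functions. If a Schur function $s_{\lambda'}$ occurs with positive coefficient, then $\lambda'$ is dominance-maximal among the monomials in its triangular expansion. Conversely, the dominance-minimal monomials of $s_{\lambda'}$ are $\trianglelefteq\lambda'$ but have length at least $\ell(\lambda')$. To get the bound, I would evaluate at $N=m-1$ variables. Then $e_m(x_1,\dots,x_{m-1})=0$, so $\omega(h_n[h_m])$ vanishes in $m-1$ variables. By Schur positivity, every $s_{\lambda'}$ occurring must then vanish in $m-1$ variables, which forces $\ell(\lambda')\ge m$, that is, $\lambda_1\ge m$.

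The main obstacle is the careful justification that $\omega$ applied to the plethysm is the Schur-positive expression $e_n[e_m]$ or $h_n[e_m]$. This follows from the standard rule $\omega(f[g])=f[\omega g]$ when $g$ has even degree, and $(\omega f)[\omega g]$ when $g$ has odd degree. Both are characters of genuine representations, namely $\mathrm{Sym}^n$ or $\Lambda^n$ of $\Lambda^m(\C^N)$, which vanish when $N<m$. Granting this, the conclusion follows because a nonnegative combination of Schur polynomials vanishes only if each surviving $s_{\lambda'}(x_1,\dots,x_{m-1})$ is zero, that is, only if $\ell(\lambda')>m-1$.
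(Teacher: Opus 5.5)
Your final argument via $\omega$ is correct, but it takes a different route from the paper.

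\textbf{The paper's route.} The paper stays with complete symmetric functions. Since $\mathrm{Sym}^n(\mathrm{Sym}^m(\C^N))\subseteq(\mathrm{Sym}^m(\C^N))^{\otimes n}$, the difference $h_m^n-h_n[h_m]$ is Schur-positive. By Young's rule, $h_m^n=\sum_{\mu\trianglerighteq(m^n)}K_{\mu,(m^n)}s_\mu$. So every constituent satisfies $\lambda\trianglerighteq(m^n)$, and in particular $\lambda_1\ge m$.

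\textbf{Your route.} You conjugate instead. The parity rule for $\omega(f[g])$ identifies $\omega(h_n[h_m])$ with $h_n[e_m]$ or $e_n[e_m]$. You then note that this vanishes in $m-1$ variables, because $e_m(x_1,\dots,x_{m-1})=0$, equivalently $\Lambda^m(\C^{m-1})=0$. Hence only $s_{\lambda'}$ with $\ell(\lambda')\ge m$ can occur.

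\textbf{Comparison.} The paper's argument is more elementary and yields the stronger conclusion $\lambda_1+\dots+\lambda_k\ge km$ for all $k\le n$. Yours needs the $\omega$--plethysm rule, which the paper only invokes later in Proposition~\ref{prop-reduction}, and gives exactly the first-row bound. In exchange, it locates the reason cleanly in a vanishing statement in few variables.

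Two small remarks on your last step:
\begin{itemize}
\item Schur positivity is not needed there. The polynomials $s_\mu(x_1,\dots,x_{m-1})$ with $\ell(\mu)\le m-1$ are linearly independent, which already suffices.
\item Like the paper, you need $n\ge1$, since $h_0[h_m]=s_\emptyset$.
\end{itemize}

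When you write this up, delete the exploratory detours, because two of them contain false statements.
\begin{itemize}
\item The reduction ``it suffices to show that $p_n^m(\lambda)>0$ forces $\lambda_1\ge m$'' is not merely hard; it is false for $m\ge2$. For example, $p_n^m(1^{mn})>0$, since it counts set partitions of $\{1,\dots,mn\}$ into $n$ blocks of size $m$.
\item Your claim that dominance gives no lower bound on $\lambda_1$ is mistaken. Every constituent of $h_n[h_m]$ dominates $(m^n)$, which gives $\lambda_1\ge m$ immediately; this is exactly the paper's one-line proof.
\end{itemize}
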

\begin{proof}
Since $\mathrm{Sym}^n(\mathrm{Sym}^m(\C^N))\subseteq \bigotimes^n{\mathrm{Sym}^m}(\C^N)$
for all $N$, the difference $h_m^n-h_n[h_m]$ is Schur-positive. Moreover,
$h_m^n=\sum_{\mu\trianglerighteq (m^n)}K_{\mu,(m^n)}s_\mu$, where
$K_{\mu,(m^n)}$ is the Kostka number, that is, the number of semistandard
Young tableaux of shape $\mu$ and content $(m^n)$. Hence
$\langle h_n[h_m],s_\lambda\rangle>0$ implies
$\langle h_m^n,s_\lambda\rangle>0$, so
$\lambda\trianglerighteq (m^n)$ and in particular $\lambda_1\ge m$.
\end{proof}

Next, we compute the action of $h_r^\perp$ on plethysms of complete symmetric
functions.

\begin{lemma}\label{lem:hperp}
$h_r^\perp\, h_n[h_m]$ is the degree $mn-r$ part of $\sum_{j=0}^{n}h_j[A_m]$.
\end{lemma}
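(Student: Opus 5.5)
We need to prove that $h_r^\perp h_n[h_m]$ equals the degree $mn-r$ part of $\sum_{j=0}^n h_j[A_m]$. The plan is to use the translation identity \eqref{sum-with-perp} with $g=h_n[h_m]$, and then split by degree.

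\textbf{Step 1: apply the translation identity.} By \eqref{sum-with-perp},
\begin{equation*}
\sum_{r\ge0} h_r^\perp\big(h_n[h_m]\big) = \big(h_n[h_m]\big)[X+1] = h_n\big[h_m[X+1]\big].
\end{equation*}
Here we use associativity of plethysm: substituting $X+1$ into $h_n[h_m]$ is the same as substituting $h_m[X+1]$ into $h_n$.

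\textbf{Step 2: expand the inner plethysm.} By \eqref{co-prod-for-h-n}, and since $h_b[1]=1$ for every $b\ge0$,
\begin{equation*}
h_m[X+1]=\sum_{a=0}^m h_a[X] = 1 + A_m .
\end{equation*}

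\textbf{Step 3: expand the outer plethysm.} We have $h_n[1+A_m] = h_n[1+Y]$ with $Y=A_m$. Applying \eqref{co-prod-for-h-n} once more, now with $h_a[1]=1$,
\begin{equation*}
h_n[1+A_m]=\sum_{j=0}^n h_j[A_m].
\end{equation*}
Justifying that \eqref{co-prod-for-h-n} is valid when $Y$ is the symmetric function $A_m$ is standard, since plethystic addition is additive on the alphabet.

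\textbf{Step 4: compare homogeneous components.} The operator $h_r^\perp$ lowers degree by $r$, and $h_n[h_m]$ is homogeneous of degree $mn$. So $h_r^\perp h_n[h_m]$ is exactly the degree $mn-r$ component of the left-hand side of Step 1. Taking the degree $mn-r$ component of $\sum_{j=0}^n h_j[A_m]$ therefore gives the lemma.

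\textbf{Main point of care.} The only delicate issue is the convention for plethysm with the constant $1$. We treat $1$ as a single variable equal to $1$, so that $p_k[1]=1$ for all $k$. Under this convention $h_b[1]=1$, and \eqref{sum-with-perp} is the standard identity $g[X+1]=\sum_r h_r^\perp g$.
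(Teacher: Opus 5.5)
Your proposal is correct and follows the paper's proof essentially step for step. You apply \eqref{sum-with-perp} to $h_n[h_m]$, use $h_m[X+1]=1+A_m$ and \eqref{co-prod-for-h-n} with $h_i[1]=1$ to obtain $h_n[1+A_m]=\sum_{j=0}^n h_j[A_m]$, and compare homogeneous components. You also make explicit the associativity step $(h_n[h_m])[X+1]=h_n\big[h_m[X+1]\big]$, which the paper uses without comment.
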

\begin{proof}
Since $h_m[X+1]=\sum_{i=0}^m h_i[X]$, we have $h_m[X+1]=1+A_m$ as a symmetric
function in $X$. Using also $h_i[1]=1$ for all $i$,
\begin{align*}
\sum_{r\ge0} h_r^\perp\, h_n[h_m]&=h_n[1+A_m] &&\text{(by \eqref{sum-with-perp})}\\
&=\sum_{i+j=n}h_i[1]\,h_j[A_m] &&\text{(by \eqref{co-prod-for-h-n})}\\
&=\sum_{j=0}^nh_j[A_m]
\end{align*}
Since $h_r^\perp h_n[h_m]$ is homogeneous of degree $mn-r$, comparing
components of each degree proves the lemma.
\end{proof}

\medskip
\noindent\textbf{Removing the first row.} Assume that
$mn-|\lambda|\ge \lambda_1$, so that $\lambda[mn]$ is a partition. Expanding
the Jacobi--Trudi determinant
$s_{\lambda[mn]}=\det\big(h_{(\lambda[mn])_i-i+j}\big)_{1\leq i,j\leq \ell(\lambda)+1}$
along its first row gives
$$s_{\lambda[mn]}=\sum_{i\ge0}(-1)^i\, h_{mn-|\lambda|+i}\; e_i^\perp s_\lambda .$$
Taking the inner product with $h_n[h_m]$ and applying Lemma~\ref{lem:hperp}
(only the component of degree $|\lambda|-i$ pairs non-trivially with
$e_i^\perp s_\lambda$),
\begin{align*}\langle h_n[h_m],s_{\lambda[mn]}\rangle
&=\sum_{i\ge0}(-1)^i\Big\langle h_{mn-|\lambda|+i}^\perp h_n[h_m],\ e_i^\perp s_\lambda\Big\rangle
=\sum_{i\ge0}(-1)^i\Big\langle \sum_{j=0}^{n}h_j[A_m],\ e_i^\perp s_\lambda\Big\rangle\\
&=\sum_{i\ge0}(-1)^i\Big\langle e_i\sum_{j=0}^{n}h_j[A_m],\ s_\lambda\Big\rangle
=\Big\langle \frac{1}{H}\sum_{j=0}^{n}h_j[A_m],\ s_\lambda\Big\rangle,
\end{align*}
where $\frac{1}{H}=\sum_{i\ge0}(-1)^ie_i$.

\begin{theorem}\label{thm-perp}
For $m\ge\lambda_1$ and $n\ge\lfloor(|\lambda|+\lambda_1)/2\rfloor$
(in particular, for $m,n\ge|\lambda|$), whenever $\lambda[mn]$ is a partition,
$$\langle h_n[h_m],s_{\lambda[mn]}\rangle=\langle H[H-1-h_1], s_\lambda\rangle.$$
\end{theorem}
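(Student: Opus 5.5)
The plan is to start from the identity established just before the statement,
$$\langle h_n[h_m],s_{\lambda[mn]}\rangle=\Big\langle \tfrac1H\sum_{j=0}^{n}h_j[A_m],\ s_\lambda\Big\rangle,$$
and then remove the two truncations, in $j$ and in $m$, one at a time. Write $A_m=h_1+B_m$ with $B_m=h_2+\cdots+h_m$.

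\textbf{Step 1: expand in $h_1$.} By \eqref{co-prod-for-h-n} and $h_a[h_1]=h_a$, we have $h_j[A_m]=\sum_{a+b=j}h_a\,h_b[B_m]$. Hence
$$\sum_{j\le n}h_j[A_m]\cdot\tfrac1H=\sum_{b\le n} h_b[B_m]\cdot\Big(\sum_{a\le n-b}h_a\Big)\Big(\sum_{i\ge0}(-1)^ie_i\Big).$$

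\textbf{Step 2: the truncated product $\big(\sum_{a\le N}h_a\big)\cdot\frac1H$.} Since $H\cdot\frac1H=1$, this product equals $1$ plus terms of degree $k>N$. The degree-$k$ term is $\sum_{a\le N}(-1)^{k-a}h_ae_{k-a}$. By the Pieri rule, $h_ae_{k-a}=s_{(a,1^{k-a})}+s_{(a+1,1^{k-a-1})}$, so this alternating sum telescopes to $\pm s_{(N+1,1^{k-N-1})}$. In particular it is a single hook whose first row is $N+1$.

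\textbf{Step 3: the error terms do not pair with $s_\lambda$.} The function $h_b[B_m]$ is a sum of products of plethysms $h_c[h_k]$ with $k\ge2$, so it is Schur-positive of degree at least $2b$. Take $N=n-b$. Each error term $h_b[B_m]\,s_{(N+1,1^{k-N-1})}$ expands, by Littlewood--Richardson, into Schur functions $s_\mu$ with $\mu\supseteq(n-b+1)$ and $|\mu|\ge 2b+k\ge n+b+1$. If such an $s_\mu$ paired with $s_\lambda$, we would need both $\lambda_1\ge n-b+1$ and $|\lambda|\ge n+b+1$. Adding these gives $|\lambda|+\lambda_1\ge 2n+2$, which contradicts $n\ge\lfloor(|\lambda|+\lambda_1)/2\rfloor$. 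So only the constant term $1$ survives, and the coefficient equals $\langle\sum_{b\le n}h_b[B_m],s_\lambda\rangle$. The restriction $b\le n$ is harmless: $h_b[B_m]$ has degree at least $2b$, and $2b>2n\ge|\lambda|$ for $b>n$ except in the degenerate case $\lambda_1=0$, which is trivial. The coefficient is therefore $\langle H[B_m],s_\lambda\rangle$.

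\textbf{Step 4: remove the dependence on $m$.} By $H[X+Y]=H[X]H[Y]$,
$$H[H-1-h_1]=H[B_m]\cdot H[h_{m+1}+h_{m+2}+\cdots].$$
The second factor is $1$ plus a sum of products of the $h_c[h_k]$ with $k\ge m+1$. By Lemma~\ref{lam-1-ge-m}, every Schur constituent of such a product has first row at least $m+1$. By Littlewood--Richardson again, multiplying by any symmetric function, here $H[B_m]$, keeps every constituent containing a row of length at least $m+1$. Since $m\ge\lambda_1$, none of these terms pairs with $s_\lambda$, so $\langle H[B_m],s_\lambda\rangle=\langle H[H-1-h_1],s_\lambda\rangle$.

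The main obstacle is the bookkeeping in Steps 2--3: the telescoping to a single hook, and pairing the first-row bound with the degree bound so that the hypothesis on $n$ comes out exactly as $\lfloor(|\lambda|+\lambda_1)/2\rfloor$. The $m$-step is comparatively routine once Lemma~\ref{lam-1-ge-m} is available.
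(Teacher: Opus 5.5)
Your proof is correct, and it is organized differently from the paper's.

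\textbf{The paper's route.} It keeps the factor $\frac1H$ throughout. By adjointness, $\langle \frac1H g,s_\lambda\rangle=\sum_i(-1)^i\langle g,e_i^\perp s_\lambda\rangle$, so only constituents $s_\nu$ of $g$ with $\nu\subseteq\lambda$ can contribute. The paper then checks invariance under two steps. Under $m\to m+1$ it uses Lemma~\ref{lam-1-ge-m}. Under $n\to n+1$, the new term $h_{n+1}[A_m]$ splits into summands $h_{a_1}\prod_{i\ge2}h_{a_i}[h_i]$, whose constituents have first row at least $a_1$ and degree at least $2(n+1)-a_1$. Finally it identifies the constant by passing to the limit $\frac1H H[A_\infty]=H[H-1-h_1]$.

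\textbf{Your route.} You cancel $\frac1H$ at the outset against the $h_1$-part of $A_m$, via the identity $\big(\sum_{a\le N}h_a\big)\frac1H=1+\sum_{k>N}(-1)^{k-N}s_{(N+1,1^{k-N-1})}$. Your hook arm $n-b+1$ and degree bound $n+b+1$ are exactly the paper's $a_1$ and $2(n+1)-a_1$ with $b=\sum_{i\ge2}a_i$. So the contradiction $|\lambda|+\lambda_1\ge 2n+2$ arises in the same way.

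\textbf{What each buys.} Your route needs no limiting argument. It also yields an explicit intermediate statement: for every $m\ge1$ and every $n\ge\lfloor(|\lambda|+\lambda_1)/2\rfloor$ with $\lambda[mn]$ a partition, the coefficient equals $\langle H[h_2+\cdots+h_m],s_\lambda\rangle$. For example, this reproduces the stable values $1,2,3$ in rows $m=2,3,4$ of Table~\ref{table-42}. The $m$-step then reduces to the factorization $H[H-1-h_1]=H[h_2+\cdots+h_m]\,H[h_{m+1}+h_{m+2}+\cdots]$. The paper's route is shorter because it avoids the hook telescoping.

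A minor point: your caveat about $\lambda_1=0$ is unnecessary. The hypothesis on $n$ already gives $2n+2>|\lambda|$ in every case.
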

\begin{proof}
Since $e_i^\perp s_\lambda=s_{\lambda/(1^i)}$ involves only Schur functions
$s_\nu$ with $\nu\subseteq\lambda$, the pairing
$\big\langle \frac1H\, g,\, s_\lambda\big\rangle=\sum_i(-1)^i\langle g, e_i^\perp s_\lambda\rangle$
vanishes whenever every Schur function $s_\nu$ in the expansion of $g$ has
$\nu_1>\lambda_1$ or $|\nu|>|\lambda|$. We show that once $m\ge\lambda_1$ and
$n\ge\lfloor(|\lambda|+\lambda_1)/2\rfloor$, increasing $m$ or $n$ does not
change the inner product computed above.

\begin{itemize}
\item \emph{Changing $m$ to $m+1$.} By \eqref{co-prod-for-h-n},
\begin{align*}
\frac{1}{H}\sum_{j=0}^{n}h_j[A_{m+1}]- \frac{1}{H}\sum_{j=0}^{n}h_j[A_m]
&=\frac{1}{H}\sum_{j=0}^n\sum_{t=0}^{j} h_{j-t}[A_m]\, h_t[h_{m+1}]-\frac{1}{H}\sum_{j=0}^{n}h_j[A_m]\\
&=\frac{1}{H}\sum_{j=0}^{n}\sum_{t=1}^{j} h_{j-t}[A_m]\, h_t[h_{m+1}],
\end{align*}
since the terms with $t=0$ cancel. Each remaining term contains a factor
$h_t[h_{m+1}]$ with $t\ge1$, and by Lemma~\ref{lam-1-ge-m} every Schur
function $s_\nu$ in the expansion of $h_t[h_{m+1}]$ has $\nu_1\ge m+1$. By the
Littlewood--Richardson rule, multiplying by the Schur-positive function
$h_{j-t}[A_m]$ preserves this property. Hence the pairing with $s_\lambda$ does
not change once $m\ge\lambda_1$.

\item \emph{Changing $n$ to $n+1$.} This adds the term
$$\frac1H\,h_{n+1}[A_m]=\frac1H\sum_{a_1+\dots+a_m=n+1}\ \prod_{i=1}^m h_{a_i}[h_i]
=\frac1H\sum_{a_1+\dots+a_m=n+1}h_{a_1}\prod_{i=2}^mh_{a_i}[h_i].$$
Each summand has the factor $h_{a_1}$, so every Schur function $s_\nu$ in its
expansion has $\nu_1\ge a_1$, and the summand has degree
$a_1+\sum_{i\ge 2}ia_i\ge a_1+2\sum_{i\ge 2}a_i=2(n+1)-a_1$. For the summand to
pair non-trivially with $s_\lambda$ we would need $a_1\le\lambda_1$ and
$2(n+1)-a_1\le |\lambda|$. Adding these inequalities gives
$n+1\le (|\lambda|+\lambda_1)/2$, which is impossible once
$n\ge\lfloor(|\lambda|+\lambda_1)/2\rfloor$.
\end{itemize}

Hence the coefficient is constant in this range, and it equals the coefficient
obtained in the limit $m,n\to\infty$:
$$\frac1H\cdot H[A_\infty]=H[-h_1]\,H[H-1]=H[H-1-h_1],$$
using $\frac1H=H[-h_1]$, $A_\infty=H-1$ and $H[X]\,H[Y]=H[X+Y]$.
\end{proof}

\begin{remark}
The two expressions for the stable value in Theorems~\ref{omitting-h-1}
and~\ref{thm-perp} agree. Since $H-h_1=1+(H-1-h_1)$ and $h_j[1]=1$,
\eqref{co-prod-for-h-n} gives $h_k[H-h_1]=\sum_{j\le k}h_j[H-1-h_1]$. Every
term of $H-1-h_1$ has degree at least $2$, so $h_j[H-1-h_1]$ has no terms of
degree less than $2j$; hence only $j\le\lfloor|\lambda|/2\rfloor$ contribute to
the coefficient of $s_\lambda$, in both $h_{\lfloor|\lambda|/2\rfloor}[H-h_1]$
and $H[H-1-h_1]$.
\end{remark}

\section{Partitions with few columns}\label{sec-few-columns}

In this section, we give a positive formula for the stable Foulkes
coefficient when $\lambda$ varies over partitions with a bounded number of
columns, that is, with $\lambda_1$ bounded. We first reduce the problem, for
$\lambda_1\le k$, to a multiplicity computation for $GL_k(\C)$, and then carry
this out explicitly for $k\le4$.

\smallskip
Throughout this section, we write
$$c(\lambda)=\langle s_\lambda,H[H-1-h_1]\rangle$$
for the stable Foulkes coefficient of $\lambda$ (Theorem~\ref{thm-perp}), and
$\lambda'$ for the conjugate partition.

\smallskip
Let $E=\sum_{n\ge0}e_n$. For a
finite-dimensional polynomial representation $U$ of $GL_k(\C)$ with character
$\mathrm{Ch}(U)$, the symmetric algebra $\mathrm{Sym}(U)=\bigoplus_n\mathrm{Sym}^n(U)$
and the exterior algebra $\Lambda(U)=\bigoplus_n\Lambda^n(U)$ have characters
$H[\mathrm{Ch}(U)]$ and $E[\mathrm{Ch}(U)]$.

\smallskip
We also use rational
representations: for weakly decreasing $\alpha\in\Z^k$, $W_\alpha(\C^k)$
denotes the irreducible representation with highest weight $\alpha$, so that
$W_\alpha(\C^k)\otimes{\det}^t=W_{\alpha+(t^k)}(\C^k)$ for $t\in\Z$.

\subsection{A reduction to \texorpdfstring{$GL_k(\C)$}{GLk(C)}}

\begin{proposition}\label{prop-reduction}
Let $k\ge1$, let $V=\C^k$, and let $\lambda$ be a partition with
$\lambda_1\le k$. Then $c(\lambda)$ is the multiplicity of $W_{\lambda'}(V)$ in
$$M_k=\bigotimes_{i=2}^{k}F_i\big(\Lambda^iV\big)\;\text{where}\;
F_i=\begin{cases}\mathrm{Sym}&\text{if } i \text{ is even},\\ \Lambda&\text{if } i\text{ is odd}.\end{cases}$$
\end{proposition}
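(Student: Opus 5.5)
The plan is to apply the involution $\omega$ to the identity $c(\lambda)=\langle s_\lambda,H[H-1-h_1]\rangle$ from Theorem~\ref{thm-perp} and then read the result as a character of $GL_k(\C)$. Since $\omega$ is an isometry with $\omega s_\lambda=s_{\lambda'}$, we get $c(\lambda)=\langle s_{\lambda'},\omega(H[H-1-h_1])\rangle$. Write $H-1-h_1=\sum_{i\ge2}h_i$, so that
\[
H[H-1-h_1]=\prod_{i\ge2}H[h_i].
\]
It then suffices to compute $\omega(H[h_i])$. For a single symmetric function $g$ of degree $i$ one has the standard rule $\omega(f[g])=f[\omega g]$ when $i$ is even, and $\omega(f[g])=(\omega f)[\omega g]$ when $i$ is odd. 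This follows from $\omega p_r=(-1)^{r-1}p_r$ and the fact that $p_r[g]$ is homogeneous of degree $ri$.

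Applying this with $f=H$ and $g=h_i$ gives $\omega(H[h_i])=H[e_i]$ for $i$ even and $\omega(H[h_i])=E[e_i]$ for $i$ odd. Since $\omega$ is a ring homomorphism,
\[
\omega\big(H[H-1-h_1]\big)=\prod_{i\ge2\ \text{even}}H[e_i]\prod_{i\ge3\ \text{odd}}E[e_i].
\]

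Next we specialize to $k$ variables. Since $\ell(\lambda')=\lambda_1\le k$, the pairing $\langle s_{\lambda'},F\rangle$ equals the coefficient of $s_{\lambda'}(x_1,\dots,x_k)$ in $F(x_1,\dots,x_k)$, because Schur functions of length at most $k$ remain linearly independent after specialization. In $k$ variables $e_i=0$ for $i>k$, so $H[e_i]=E[e_i]=1$ for those $i$ and the product truncates to $2\le i\le k$. Finally, $e_i(x_1,\dots,x_k)$ is the character of $\Lambda^iV$. By the facts recalled above, $H[\mathrm{Ch}\,U]$ and $E[\mathrm{Ch}\,U]$ are the characters of $\mathrm{Sym}(U)$ and $\Lambda(U)$, and characters are multiplicative under tensor products. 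Hence the truncated product is exactly $\mathrm{Ch}(M_k)$.

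The degree-$|\lambda|$ component of $M_k$ is finite-dimensional, because every $\Lambda^iV$ with $i\ge2$ has positive degree. So the multiplicity of $W_{\lambda'}(V)$ in $M_k$ is well defined and equals the coefficient of $s_{\lambda'}$ in its character, namely $c(\lambda)$.

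The main obstacle I expect is justifying the sign rule for $\omega$ on plethysm with the odd/even dichotomy. I would argue it via power sums: writing $H=\exp\big(\sum_r p_r/r\big)$ gives $H[g]=\exp\big(\sum_r p_r[g]/r\big)$. Then $\omega(p_r[g])=(-1)^{ri-r}p_r[\omega g]$, where the sign comes from the degree $ri$ of $p_r[g]$ and the relation $p_r[\omega g]=(-1)^{(i-1)r}\,\omega$-twisted terms. The sign is $+1$ when $i$ is even and $(-1)^{r-1}$ when $i$ is odd, and the latter turns $H$ into $E=\exp\big(\sum_r(-1)^{r-1}p_r/r\big)$. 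Everything else is routine bookkeeping.
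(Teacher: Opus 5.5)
Your proposal is correct and follows the paper's proof essentially step for step: factor $H[H-1-h_1]=\prod_{i\ge2}H[h_i]$, use the parity rule for $\omega$ on plethysm to get $H[e_i]$ for $i$ even and $E[e_i]$ for $i$ odd, then specialize to $k$ variables, where $e_i$ vanishes for $i>k$ and $e_i$ is the character of $\Lambda^iV$. One small slip is in your justification of the parity rule, which the paper simply cites from Macdonald: the correct relation is $\omega(p_r[g])=(-1)^{(r-1)i}\,p_r[\omega g]$, not $(-1)^{ri-r}$ (that exponent already fails for $r=1$ and $i$ even), although the conclusion you state, namely sign $+1$ for $i$ even and $(-1)^{r-1}$ for $i$ odd, is the right one.
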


\begin{proof}
Since $H[X+Y]=H[X]\,H[Y]$, we have $H[H-1-h_1]=\prod_{i\ge2}H[h_i]$; the
product makes sense because every term of $H[h_i]-1$ has degree at least $i$.

\smallskip
The involution $\omega$ is a ring automorphism and an isometry with
$\omega(s_\nu)=s_{\nu'}$, and $\omega(f[g])$ equals $f[\omega g]$ if $g$ is
homogeneous of even degree and $(\omega f)[\omega g]$ if $g$ is homogeneous of
odd degree \cite[Ch.~I, \S8]{macdonald}. Hence $\omega(H[h_i])$ equals
$H[e_i]$ for $i$ even and $E[e_i]$ for $i$ odd, and
$$c(\lambda)=\Big\langle s_{\lambda'},\ \prod_{i\ge2}\omega\big(H[h_i]\big)\Big\rangle.$$

\smallskip
Since $\ell(\lambda')=\lambda_1\le k$, this is the coefficient of
$s_{\lambda'}(x_1,\dots,x_k)$ in the specialization of the product to $k$
variables. Under this specialization $e_i\mapsto0$ for $i>k$, so the factors
with $i>k$ become $1$, while for $2\le i\le k$ the factor $H[e_i]$ or $E[e_i]$
becomes the character of $F_i(\Lambda^iV)$, because
$\mathrm{Ch}(\Lambda^iV)=e_i(x_1,\dots,x_k)$.
\end{proof}

\begin{corollary}\label{cor-three-columns}
Let $\lambda$ be a partition with $\lambda_1\le3$. Then $c(\lambda)=1$ if
$\lambda=(2^n)$ or $\lambda=(3,2^n)$ for some $n\ge0$, and $c(\lambda)=0$
otherwise.
\end{corollary}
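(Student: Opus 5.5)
By Proposition~\ref{prop-reduction} with $k=3$ and $V=\C^3$, $c(\lambda)$ is the multiplicity of $W_{\lambda'}(V)$ in
$$M_3=\mathrm{Sym}(\Lambda^2V)\otimes\Lambda(\Lambda^3V).$$
Here $\lambda'$ has at most $3$ parts. Since $\Lambda^3V=\det$ is one-dimensional, $\Lambda(\Lambda^3V)=\mathbf{1}\oplus\det$. The plan is therefore to decompose $\mathrm{Sym}(\Lambda^2V)$ completely and then tensor with $\mathbf 1\oplus\det$.

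\emph{Decomposing $\mathrm{Sym}(\Lambda^2\C^3)$.} The classical plethysm $h_n[e_2]=\sum_{\mu}s_{\mu}$ runs over partitions $\mu\vdash 2n$ whose conjugate has even parts, i.e.\ $\mu$ has columns of even length. Equivalently, $\mathrm{Sym}^n(\Lambda^2)$ is multiplicity-free with constituents $s_{\nu'}$ for $\nu=(2\nu_1',\dots)$, i.e.\ shapes with even column lengths. Applying $\omega$ recovers the dual statement $h_n[h_2]=\sum s_{2\mu}$.

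Restrict to $\ell\le3$. A partition with at most $3$ rows and all column lengths even has only columns of length $2$, so it is $(a,a)$ for $a\ge0$. Alternatively, use the isomorphism $\Lambda^2V\cong V^*\otimes\det$ for $GL_3$. Then $\mathrm{Sym}^a(\Lambda^2V)\cong\mathrm{Sym}^a(V^*)\otimes\det^a$, which has highest weight $(0,0,-a)+(a,a,a)=(a,a,0)$. Either route gives
$$\mathrm{Sym}(\Lambda^2V)=\bigoplus_{a\ge0}W_{(a,a,0)}(V),$$
each constituent with multiplicity one.

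\emph{Tensoring with the exterior factor.} Tensoring with $\det$ shifts $(a,a,0)$ to $(a+1,a+1,1)$, so
$$M_3=\bigoplus_{a\ge0}\big(W_{(a,a)}\oplus W_{(a+1,a+1,1)}\big),$$
and all these weights are distinct. Hence $c(\lambda)\in\{0,1\}$, and $c(\lambda)=1$ exactly when $\lambda'=(a,a)$ or $\lambda'=(a+1,a+1,1)$. Conjugating, $\lambda'=(a,a)$ gives $\lambda=(2^a)$, and $\lambda'=(a+1,a+1,1)$ gives $\lambda=(3,2^{a})$, using that the first column has length $a+1$ and its length-$3$ row appears once. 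This proves the claim, including $\lambda=\emptyset$ (the case $a=0$) and $\lambda=(3)$.

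The only step needing care is the decomposition of $\mathrm{Sym}(\Lambda^2\C^3)$. The duality argument $\Lambda^2V\cong V^*\otimes\det$ avoids citing the plethysm formula, so I would use it as the primary argument.
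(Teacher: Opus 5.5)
Your proposal is correct and matches the paper's proof essentially step for step. Both use the reduction to $M_3=\mathrm{Sym}(\Lambda^2V)\otimes\Lambda(\Lambda^3V)$ via Proposition~\ref{prop-reduction}, the splitting $\Lambda(\Lambda^3V)=\mathbf 1\oplus\det$, the identification $\mathrm{Sym}^a(\Lambda^2V)\cong\mathrm{Sym}^a(V^*)\otimes{\det}^a=W_{(a,a,0)}(V)$, and conjugation of $(a,a)$ and $(a+1,a+1,1)$ to $(2^a)$ and $(3,2^a)$; the even-column plethysm route through $h_n[e_2]$ that you sketch is a valid cross-check but, as you say, unnecessary.
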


\begin{proof}
Let $V=\C^3$. Taking $k=3$ in Proposition~\ref{prop-reduction}, we must
compute the multiplicity of $W_{\lambda'}(V)$ in
$M_3=\mathrm{Sym}(\Lambda^2V)\otimes\Lambda(\Lambda^3V)$. Since
$\Lambda^3V=\det$, we have $\Lambda(\Lambda^3V)=\mathbf 1\oplus\det$. Since
$\Lambda^2V\cong V^*\otimes\det$, we have
$\mathrm{Sym}^n(\Lambda^2V)\cong\mathrm{Sym}^n(V^*)\otimes{\det}^n=W_{(n,n,0)}(V)$.

\smallskip
Therefore
$$M_3\cong\bigoplus_{n\ge0}\Big(W_{(n,n,0)}(V)\oplus W_{(n+1,n+1,1)}(V)\Big),$$
which is multiplicity-free. The conjugates of $(n,n)$ and $(n+1,n+1,1)$ are
$(2^n)$ and $(3,2^n)$.
\end{proof}

\subsection{Partitions with at most four columns}

For a partition $\lambda$ with $\lambda_1\le4$, we write
$$\lambda=(4^{m_4}3^{m_3}2^{m_2}1^{m_1}),$$
so that
$$\lambda'=(m_1+m_2+m_3+m_4,\ m_2+m_3+m_4,\ m_3+m_4,\ m_4).$$

\begin{theorem}\label{thm-four-columns}
Let $\lambda=(4^{m_4}3^{m_3}2^{m_2}1^{m_1})$, and let
$\mathcal P_\lambda\subset\mathbb{R}^5$ be the polytope of points
$(t,\varepsilon_1,\varepsilon_2,\varepsilon_3,\varepsilon_4)$ satisfying
\begin{gather*}
0\le\varepsilon_i\le1,\qquad t+\varepsilon_1+\varepsilon_2+\varepsilon_3+\varepsilon_4\ge3,\\[4pt]
\varepsilon_1-\varepsilon_2=m_1,\qquad \varepsilon_3-\varepsilon_4=m_3,\qquad
\varepsilon_2-\varepsilon_3\le m_2,\qquad t+\varepsilon_4\le m_4.
\end{gather*}
Then $c(\lambda)$ is the number of lattice points of $\mathcal P_\lambda$, and is
given explicitly by the formula of Theorem~\ref{thmE}.
\end{theorem}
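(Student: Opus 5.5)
The plan is to start from Proposition~\ref{prop-reduction} with $k=4$ and $V=\C^4$. It gives that $c(\lambda)$ is the multiplicity of $W_{\lambda'}(V)$ in
$$M_4=\mathrm{Sym}(\Lambda^2V)\otimes\Lambda(\Lambda^3V)\otimes\mathrm{Sym}(\Lambda^4V).$$
I would decompose each of the three factors explicitly and then combine them with the Pieri rule.

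\textbf{Decomposing the factors.} Each factor has a clean description:
\begin{itemize}
\item Since $\Lambda^4V=\det$, we have $\mathrm{Sym}(\Lambda^4V)=\bigoplus_{s\ge0}{\det}^s$.
\item Since $\Lambda^3V\cong V^*\otimes\det$, we have $\Lambda^j(\Lambda^3V)\cong\Lambda^jV^*\otimes{\det}^j\cong\Lambda^{4-j}V\otimes{\det}^{j-1}$ for $0\le j\le4$. For $j=0$ this is $\Lambda^4V\otimes{\det}^{-1}=\mathbf 1$. Putting $r=4-j$ gives $\Lambda(\Lambda^3V)\cong\bigoplus_{r=0}^4\Lambda^rV\otimes{\det}^{3-r}$.
\item For $\mathrm{Sym}(\Lambda^2V)$ I will quote the classical multiplicity-free decomposition: it is the sum of $W_\mu(V)$ over partitions $\mu$ all of whose columns have even length. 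For $\ell(\mu)\le4$ this means $\mu=(a,a,b,b)$ with $a\ge b\ge0$. This is $\langle s_\mu,H[e_2]\rangle=1$ exactly for such $\mu$, and it can be cited from Macdonald, Ch.~I, \S8.
\end{itemize}
Hence
$$M_4\cong\bigoplus_{a\ge b\ge0}\ \bigoplus_{s\ge0}\ \bigoplus_{r=0}^4 W_{(a,a,b,b)}(V)\otimes\Lambda^rV\otimes{\det}^{3-r+s}.$$

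\textbf{Applying the Pieri rule.} By Pieri, $W_{(a,a,b,b)}\otimes\Lambda^rV$ is the multiplicity-free sum of $W_{(a+\varepsilon_1,\,a+\varepsilon_2,\,b+\varepsilon_3,\,b+\varepsilon_4)}$. Here $\varepsilon\in\{0,1\}^4$ has $\sum\varepsilon_i=r$, and the result must be a partition. So $c(\lambda)$ is the number of tuples $(a,b,s,\varepsilon)$ satisfying
$$(a+\varepsilon_1,a+\varepsilon_2,b+\varepsilon_3,b+\varepsilon_4)+(3-|\varepsilon|+s)(1^4)=\lambda',$$
where $|\varepsilon|=\sum\varepsilon_i$ and $\lambda'=(m_1+m_2+m_3+m_4,\,m_2+m_3+m_4,\,m_3+m_4,\,m_4)$. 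Reading off consecutive differences gives:
\begin{itemize}
\item $\varepsilon_1-\varepsilon_2=m_1$ and $\varepsilon_3-\varepsilon_4=m_3$;
\item $a-b=m_2-(\varepsilon_2-\varepsilon_3)$, so the condition $a\ge b$ is $\varepsilon_2-\varepsilon_3\le m_2$;
\item $m_4=b+s+3-|\varepsilon|+\varepsilon_4$.
\end{itemize}
I set $t=b+3-|\varepsilon|$. Then $b\ge0$ becomes $t+|\varepsilon|\ge3$, and $s\ge0$ becomes $t+\varepsilon_4\le m_4$. Conversely, a lattice point of $\mathcal P_\lambda$ determines $b,s,a$ uniquely, with $a\ge b\ge0$ and $s\ge0$. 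The partition condition in Pieri is then automatic because $\lambda'$ is a partition. This gives a bijection with the lattice points of $\mathcal P_\lambda$.

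\textbf{Explicit formula.} Next I enumerate. If $m_1\ge2$ or $m_3\ge2$, no $\varepsilon\in\{0,1\}^4$ satisfies the two equalities, so $c(\lambda)=0$. Otherwise $(\varepsilon_1,\varepsilon_2)$ is forced to be $(1,0)$ when $m_1=1$, and is $(0,0)$ or $(1,1)$ when $m_1=0$; similarly for $(\varepsilon_3,\varepsilon_4)$ with $m_3$. For each admissible $\varepsilon$ the number of $t$ is $(m_4-\varepsilon_4-3+|\varepsilon|+1)_+$, subject to $\varepsilon_2-\varepsilon_3\le m_2$. The only binding case of that constraint is $\varepsilon_2=1$, $\varepsilon_3=0$, $m_2=0$, which produces the indicator $[m_2\ge1]$. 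Summing the at most four terms in each of the four cases $(m_1,m_3)\in\{0,1\}^2$ gives the formula of Theorem~\ref{thmE}; for instance, in case $(0,0)$ the four choices of $\varepsilon$ contribute $(m_4-2)_+$, $m_4\cdot[m_2\ge1]$, $(m_4-1)_+$ and $m_4+1$.

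\textbf{Main obstacle.} The delicate points are the bookkeeping of determinant twists for $\Lambda(\Lambda^3V)$ and the correct citation of the even-column decomposition of $\mathrm{Sym}(\Lambda^2V)$. After that, the argument is a routine matching of highest weights.
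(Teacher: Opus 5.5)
Your proposal is correct and follows the paper's proof essentially step for step: the same reduction to $M_4$, the same decompositions of the three factors, Pieri's rule, and the same case-by-case enumeration. The only cosmetic difference is in how you define $t$. You set $t=b+3-|\varepsilon|$, tying it to the $\mathrm{Sym}(\Lambda^2V)$ parameter, whereas the paper takes $t$ to be the total determinant exponent $3-|\varepsilon|+s$. Since $b$ and $s$ enter only through $b+s=m_4-3+|\varepsilon|-\varepsilon_4$, the two choices differ by reflecting the interval $3-|\varepsilon|\le t\le m_4-\varepsilon_4$ onto itself, and they give the same polytope $\mathcal P_\lambda$.
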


\begin{proof}
Let $V=\C^4$. Taking $k=4$ in Proposition~\ref{prop-reduction}, we must
compute the multiplicity of $W_{\lambda'}(V)$ in
$M_4=\mathrm{Sym}(\Lambda^2V)\otimes\Lambda(\Lambda^3V)\otimes\mathrm{Sym}(\Lambda^4V)$.
We decompose the three factors.
\begin{enumerate}\setlength{\itemsep}{6pt}
\item The symmetric algebra $\mathrm{Sym}(\Lambda^2V)$ is the multiplicity-free
sum of the $W_\alpha(V)$ over all partitions $\alpha$ with at most $4$ parts all
of whose columns have even length \cite[Ch.~I]{macdonald}; that is,
$\mathrm{Sym}(\Lambda^2V)\cong\bigoplus_{P\ge Q\ge0}W_{(P,P,Q,Q)}(V)$.
\item Since $\Lambda^3V\cong V^*\otimes\det$ and
$\Lambda^jV^*\cong\Lambda^{4-j}V\otimes{\det}^{-1}$,
$$\Lambda^j(\Lambda^3V)\cong\Lambda^j(V^*)\otimes{\det}^j\cong\Lambda^{4-j}V\otimes{\det}^{\,j-1}\qquad(0\le j\le4).$$
\item Since $\Lambda^4(V)\cong\mathrm{det}$, $\mathrm{Sym}(\Lambda^4V)=\bigoplus_{c\ge0}{\det}^c$.
\end{enumerate}

\smallskip
Writing $r=4-j$ and $t=j-1+c$, we obtain
$$M_4\cong\bigoplus_{P\ge Q\ge0}\ \bigoplus_{r=0}^{4}\ \bigoplus_{t\ge3-r}W_{(P,P,Q,Q)}(V)\otimes\Lambda^rV\otimes{\det}^t.$$

\smallskip
By Pieri's rule, $W_\alpha(V)\otimes\Lambda^rV$ is the multiplicity-free sum of
the $W_{\alpha+\varepsilon}(V)$ over all $\varepsilon\in\{0,1\}^4$ with
$|\varepsilon|=r$ for which $\alpha+\varepsilon$ is a partition. Hence the
multiplicity of $W_\mu(V)$, where $\mu=\lambda'$, is the number of
$(t,\varepsilon)\in\Z\times\{0,1\}^4$ with $t\ge3-|\varepsilon|$ such that
$\alpha=\mu-\varepsilon-(t^4)$ has the form $(P,P,Q,Q)$ with $P\ge Q\ge0$,
where $(t^4)=(t,t,t,t)$. Note that $\alpha+\varepsilon=\mu-(t^4)$ is always a
partition, since $\mu$ is, so the condition in Pieri's rule holds
automatically.

\smallskip
It remains to express the conditions on $\alpha$ in terms of $\lambda$. We have
$\alpha_i=\mu_i-\varepsilon_i-t$, and, by the formula for $\lambda'$ above,
$\mu_1-\mu_2=m_1$, $\mu_2-\mu_3=m_2$, $\mu_3-\mu_4=m_3$ and $\mu_4=m_4$.
Therefore
\begin{align*}
\alpha_1=\alpha_2 &\iff \varepsilon_1-\varepsilon_2=\mu_1-\mu_2=m_1,\\
\alpha_2\ge\alpha_3 &\iff \varepsilon_2-\varepsilon_3\le\mu_2-\mu_3=m_2,\\
\alpha_3=\alpha_4 &\iff \varepsilon_3-\varepsilon_4=\mu_3-\mu_4=m_3,\\
\alpha_4\ge0 &\iff t+\varepsilon_4\le\mu_4=m_4.
\end{align*}
These are the four conditions in the second line of the definition of
$\mathcal P_\lambda$, while the first line encodes
$\varepsilon\in\{0,1\}^4$ and $t\ge3-|\varepsilon|$ for lattice points. Hence
the multiplicity of $W_\mu(V)$ equals the number of lattice points of
$\mathcal P_\lambda$.

\medskip
For the explicit formula, the equations $\varepsilon_1-\varepsilon_2=m_1$ and
$\varepsilon_3-\varepsilon_4=m_3$ have no solutions in $\{0,1\}$ unless
$m_1,m_3\le1$. Otherwise, for each admissible $\varepsilon$, the integer $t$
ranges over the interval $3-|\varepsilon|\le t\le m_4-\varepsilon_4$, which
contains $(m_4-\varepsilon_4-2+|\varepsilon|)_+$ integers. The admissible
$\varepsilon$ are listed below; the condition $\varepsilon_2-\varepsilon_3\le m_2$
fails only for $\varepsilon=(1,1,0,0)$ when $m_2=0$.
\begin{center}
\renewcommand{\arraystretch}{1.3}
\begin{tabular}{c|c|c}
$(m_1,m_3)$ & admissible $\varepsilon$ & number of values of $t$\\\hline
$(0,0)$ & $(0,0,0,0)$ & $(m_4-2)_+$\\
        & $(1,1,0,0)$ & $m_4$ \ (only if $m_2\ge1$)\\
        & $(0,0,1,1)$ & $(m_4-1)_+$\\
        & $(1,1,1,1)$ & $m_4+1$\\\hline
$(1,0)$ & $(1,0,0,0)$ & $(m_4-1)_+$\\
        & $(1,0,1,1)$ & $m_4$\\\hline
$(0,1)$ & $(0,0,1,0)$ & $(m_4-1)_+$\\
        & $(1,1,1,0)$ & $m_4+1$\\\hline
$(1,1)$ & $(1,0,1,0)$ & $m_4$
\end{tabular}
\end{center}
\smallskip
\noindent Summing each row gives the formula of Theorem~\ref{thmE}.
\end{proof}

\begin{remark}
Theorem~\ref{thm-four-columns} has several immediate consequences. If
$\lambda_1\le4$, then $c(\lambda)=0$ whenever $\lambda$ has at least two parts
equal to $1$ or at least two parts equal to $3$; $c(\lambda)$ grows linearly in
$m_4$; and it depends on $m_2$ only through whether $m_2\ge1$. Setting $m_4=0$
recovers Corollary~\ref{cor-three-columns}. For example, for $\lambda=(4,2,2)$
we have $m_4=1$, $m_2=2$, $m_1=m_3=0$, so $c(\lambda)=2+0+0+1=3$, in agreement
with the computation in Section~\ref{sec-sage}.
\end{remark}

\begin{remark}
The same approach applies to partitions with more columns, but becomes less
explicit. For $k=5$ and $V=\C^5$, each factor of $M_5$ is, up to a power of
$\det$, a symmetric or exterior algebra of $V$, $V^*$, $\Lambda^2V$ or
$\Lambda^2V^*$, and these
are multiplicity-free by classical results of Littlewood; $c(\lambda)$ is then
a sum of products of Littlewood--Richardson coefficients. For $k=6$, the
finite-dimensional factor $\Lambda(\Lambda^3\C^6)$ must be decomposed
separately. For $k\ge7$, the factor $\mathrm{Sym}(\Lambda^4\C^k)$ is not
multiplicity-free, and computing its decomposition is itself a plethysm
problem.
\end{remark}

\section{Computing stable Foulkes coefficients with SageMath}\label{sec-sage}

\begin{definition}
For a partition $\lambda$, the common value of
$\langle h_n[h_m],s_{\lambda[mn]}\rangle$ for $m,n\ge|\lambda|$ is called the
\emph{stable Foulkes coefficient} of $\lambda$.
\end{definition}

In this section, we compute a few stable Foulkes coefficients using the
formula of Theorem~\ref{thmA}, which follows from Theorem~\ref{proof1} and
Theorem~\ref{equality-of-vector-partitions}:
\begin{equation}
    \lim_{m,n\to\infty}\langle h_n[h_m],s_{\lambda[mn]}\rangle=\sum_{\sigma\in S_{\ell(\lambda)}}\sgn(\sigma)\,p^{\geq 2}\big(\lambda_1+\sigma(1)-1,\lambda_2+\sigma(2)-2,\dots,\lambda_{\ell(\lambda)}+\sigma(\ell(\lambda))-\ell(\lambda)\big).
\end{equation}
\begin{itemize}
    \item \emph{$\lambda=(k)$ is a one-row partition:}
    $\lim_{m,n\to\infty}\langle h_n[h_m],s_{(mn-k,k)}\rangle=p^{\ge 2}(k)$, the number
    of partitions of $k$ with all parts at least $2$.
    \item \emph{$\lambda=(2,2)$:} For all $m,n\geq 4$,
    \begin{equation*}
        \langle h_n[h_m],s_{(mn-4,2,2)}\rangle=p^{\ge 2}(2,2)-p^{\ge 2}(3,1)=3-2=1.
    \end{equation*}
    \item \emph{$\lambda=(4,2,2)$:} For all $m,n\geq 8$,
    \begin{align*}
        \langle h_n[h_m],s_{(mn-8,4,2,2)}\rangle&=\sum_{\sigma\in S_3}\sgn(\sigma)\,p^{\geq 2}\big(4+\sigma(1)-1,2+\sigma(2)-2,2+\sigma(3)-3\big)\\
        &=\sum_{\sigma \in S_3} \sgn(\sigma)\, p^{\ge 2}\big(3 + \sigma(1), \sigma(2), \sigma(3) - 1\big)\\
        &= p^{\ge 2}(4, 2, 2) + p^{\ge 2}(5, 3, 0) + p^{\ge 2}(6, 1, 1) \\
        &\quad-p^{\ge 2}(5, 1, 2) - p^{\ge 2}(4, 3, 1) - p^{\ge 2}(6, 2, 0)\\
        &=57+22+23-36-44-19=3.
    \end{align*}
    \item \emph{$\lambda=(1^3)$:} For all $m,n\ge 3$,
    \begin{align*}
        \langle h_n[h_m],s_{(mn-3,1^3)}\rangle&=\sum_{\sigma\in S_3}\sgn(\sigma)\,p^{\ge 2}\big(\sigma(1),\sigma(2)-1,\sigma(3)-2\big)\\
        &=p^{\ge 2}(1,1,1) - p^{\ge 2}(2,0,1) - p^{\ge 2}(1,2,0)\\
        &\quad- p^{\ge 2}(3,1,-1) + p^{\ge 2}(2,2,-1) + p^{\ge 2}(3,0,0)\\
        &=1-1-1-0+0+1=0.
    \end{align*}
\end{itemize}

\begin{remark}
The values of $p^{\ge 2}(\cdot)$ above were computed in SageMath with the
following code.
\end{remark}
\begin{lstlisting}[language=Python]
def count_vector_partitions_min_size_2(alpha):
    """
    Number of vector partitions of alpha in which every part
    has size (sum of components) at least 2.
    """
    count = 0
    for partition in VectorPartitions(alpha):
        if all(sum(part) >= 2 for part in partition):
            count += 1
    return count
\end{lstlisting}

Alternatively, the stable value can be computed as
$\langle h_{\lfloor|\lambda|/2\rfloor}[H-h_1],s_\lambda\rangle$ (Theorem~\ref{omitting-h-1}).
\begin{itemize}
    \item \emph{$|\lambda|=1$:} The stable value is $\langle h_0[H-h_1],s_\lambda\rangle=\langle 1,s_\lambda\rangle=0$.
    \item \emph{$|\lambda|=2,3$:} The stable value is
    $$\langle h_1[H-h_1],s_\lambda\rangle=\langle H-h_1,s_\lambda\rangle
    =\begin{cases}
        1 & \text{if } \lambda=(2) \text{ or } (3),\\
        0 & \text{otherwise.}
    \end{cases}$$
    \item \emph{$|\lambda|=4,5$:} The stable value is
    $\langle h_2[H-h_1],s_\lambda\rangle$, which is zero when $\ell(\lambda)>2$.
    For $\ell(\lambda)\le 2$, the possibilities are
    $\lambda\in\{(4),(3,1),(2,2),(5),(4,1),(3,2)\}$. Since only terms of degree
    at most $|\lambda|$ contribute,
    \begin{align*}
    \langle h_2[H-h_1], s_\lambda\rangle
      &= \langle h_2[h_0+h_2+h_3+h_4], s_\lambda\rangle \quad\text{if } |\lambda| = 4,\\
    \langle h_2[H-h_1], s_\lambda\rangle
      &= \langle h_2[h_0+h_2+h_3+h_4+h_5], s_\lambda\rangle \quad\text{if } |\lambda| = 5,
    \end{align*}
    which gives
    \begin{align*}
    &\langle h_2[H-h_1], s_{(4)}\rangle=2, && \langle h_2[H-h_1], s_{(3,1)}\rangle=0, && \langle h_2[H-h_1], s_{(2,2)}\rangle=1,\\
    &\langle h_2[H-h_1], s_{(5)}\rangle=2, && \langle h_2[H-h_1], s_{(4,1)}\rangle=1, && \langle h_2[H-h_1], s_{(3,2)}\rangle=1.
    \end{align*}
\end{itemize}
These values were computed with the following SageMath code:
\begin{lstlisting}[language=Python]
S = SymmetricFunctions(QQ)
s = S.schur()
h = S.complete()
print(s(h[2](h[0]+h[2]+h[3]+h[4])))
print(s(h[2](h[0]+h[2]+h[3]+h[4]+h[5])))
\end{lstlisting}
whose output is
\begin{lstlisting}[numbers=none]
s[] + s[2] + s[2, 2] + s[3] + s[3, 2] + 2*s[4] + s[4, 1] + 2*s[4, 2] + s[4, 3] + s[4, 4] + s[5] + s[5, 1] + s[5, 2] + 2*s[6] + s[6, 1] + s[6, 2] + s[7] + s[8]

s[] + s[2] + s[2, 2] + s[3] + s[3, 2] + 2*s[4] + s[4, 1] + 2*s[4, 2] + s[4, 3] + s[4, 4] + 2*s[5] + s[5, 1] + 2*s[5, 2] + s[5, 3] + s[5, 4] + 2*s[6] + 2*s[6, 1] + 2*s[6, 2] + s[6, 3] + s[6, 4] + 2*s[7] + s[7, 1] + s[7, 2] + 2*s[8] + s[8, 1] + s[8, 2] + s[9] + s[10]
\end{lstlisting}

\section*{Acknowledgments}
I thank my supervisor, Prof.~Amritanshu Prasad, for fruitful discussions. I
also thank Prof.~Chris Bowman and Prof.~Rowena Paget for their encouraging
feedback.
\section*{Declaration on the use of AI tools}
In preparing this manuscript, the author used Claude (Anthropic) to polish sentences, to check references, and to identify typographical errors. No mathematical content was generated by AI tools. The author reviewed and verified the entire manuscript and takes full responsibility for its contents.

\end{document}